\author{ADG}
\newtheorem{theorem}{Theorem}
\newtheorem{remark}{Remark}[section]
\title{On random flights with non-uniformly distributed directions}
\author{ Alessandro De Gregorio\\
\large Dipartimento di Scienze Statistiche \\
\large``Sapienza'' University of Rome\\\
\large P.le Aldo Moro, 5 - 00185, Rome - Italy \\
\large alessandro.degregorio@uniroma1.it}
\numberwithin{equation}{section}
\begin{document}

\maketitle

\begin{abstract}

This paper deals with a new class of random flights $\underline{\bf X}_d(t),t>0,$ defined in the real space $\mathbb{R}^d, d\geq 2,$ characterized by non-uniform probability distributions on the multidimensional sphere. These random motions differ from similar models appeared in literature which take directions  according to the uniform law. The  family of angular probability distributions introduced in this paper depends on a parameter $\nu\geq 0$ which gives the level of drift of the motion. Furthermore, we assume that the number of changes of direction performed by the random flight is fixed. The time lengths between two consecutive changes of orientation have joint  probability distribution given by a Dirichlet density function.  

The analysis of $\underline{\bf X}_d(t),t>0,$ is not an easy task, because it involves the calculation of integrals which are not always solvable. Therefore, we analyze the random flight $\underline{\bf X}_m^d(t),t>0,$ obtained as projection onto the lower spaces $\mathbb{R}^m,m<d,$ of the original random motion in $\mathbb{R}^d$. Then we get the probability distribution of $\underline{\bf X}_m^d(t),t>0.$ 

Although, in its general framework,  the analysis  of $\underline{\bf X}_d(t),t>0,$ is very complicated, for some values of $\nu$, we can provide some results on the process. Indeed, for $\nu=1$, we obtain the characteristic function of the random flight moving in $\mathbb{R}^d$. Furthermore, by inverting the characteristic function, we are able to give the analytic form (up to some constants) of the probability distribution of $\underline{\bf X}_d(t),t>0.$ 
\\

{\it Keywords}: Bessel functions, Dirichlet distributions, hyperspherical coordinates, isotropic random motions, non-uniform distributions on the sphere. 
 
\end{abstract}

\section{Introduction}
The random flights have been introduced for describing the real motions with finite speed. The original formulation of the random flight problem is due to Pearson, which considers a random walk with fixed and constant steps. Indeed, the Pearson's model deals with a random walker moving in the plane in straight lines with fixed length and turning through any angle whatever. The main object of interest is the position reached by the random walker after a fixed number of the steps.

 Over the years many researchers have proposed generalizations of the previous Pearson's walk randomizing the spatial displacements. In particular, the random flights have been analyzed independently by several authors starting from the same two main assumptions. The first one concerns the directions which are supposed uniformly distributed on the sphere. Furthermore, the length of the intervals between two consecutive changes of direction is an exponential random variable. Therefore, there exists an underlying homogeneous Poisson process governing the changes of direction. The reader can consult, for instance, Stadje (1987), Masoliver {\it et al}. (1993), Franceschetti (2007), Orsingher and De Gregorio (2007), Garcia-Pelayo (2008). A planar random flight with random time has also been studied in Beghin and Orsingher (2009).
 
Exponential times are not the best choice for many important applications in physics, biology, and engineering, since they assign high probability mass to short intervals. For this reason, recently, the random flight problem has been tackled modifying the assumption on the exponential intertimes. For example, Beghin and Orsingher (2010) introduced a random motion which changes direction at even-valued Poisson events. In other words this model assumes that the time between successive deviations is a Gamma random variable. It can also be interpreted as the motion of particles that can hazardously collide with obstacles of different size, some of which are capable of deviating the motion. Very recently, multidimensional random walks with Gamma intertimes have been also taken into account by  Le Ca\"er (2011) and Pogorui and Rodriguez-Dagnino (2011). Le Ca\"er (2010) and De Gregorio and Orsingher (2011), considered the joint distribution of the time displacements as Dirichlet random variables with parameters depending on the space in which the walker performs its motion. By means of the Dirichlet law we are able to assign higher probability to time displacement with intermediate length. The Dirichlet density function in somehow generalizes the uniform law and permits to explicit for each space the exact distribution of the position reached by the motion at time $t$.

In the cited papers, it is assumed that the directions are uniformly distributed on the sphere. This last assumption is not completely appropriate. Indeed, the real motions are persistent and tend to move along the same direction. Therefore, it seems not exactly realistic to have the same probability to move along each direction of the space when the motion chooses a new orientation. The aim of this paper is to analyze a spherically asymmetric random motion in $\mathbb{R}^d$, that is with non-uniformly distributed directions. Essentially, the literature is devoted to analyze random walks with uniform distribution, while on the random flights with non-uniform distributed angles few references exist: see for instance Grosjean (1953) and Barber (1993). For this reason we believe that this topic is interesting and merits further investigation.

For the sake of completeness, we point out that other multidimensional random models with finite velocity, different from the random flights, have been proposed in literature: see, for example, Orsingher (2000), Samoilenko (2001), Di Crescenzo (2002), Lachal (2006) and Lachal {\it et al}. (2006).

We describe the random flight model analyzed in this paper. Let us consider a particle which starts from the origin of $\mathbb{R}^d,d\geq 2$, and performs its motion with a constant velocity $c>0$. Hereafter, we assume that in the time interval $[0,t]$, a number $n$, with $n\geq 1$, of changes of direction of the motion is recorded. We suppose that the instants at which the random walker changes direction are $0<t_1<t_2<\cdots<t_n<t$, with $t_0=0,\,t_{n+1}=t$, and denote the length of time separating these instants by $\tau_j=t_j-t_{j-1}$, $1\leq j\leq n+1$ where $0<\tau_j<t-\sum_{k=0}^{j-1}\tau_k$, $1\leq j\leq n$, and $\tau_{n+1}=t-\sum_{j=1}^n\tau_j$. 

The directions of the particle are represented by the points on the surface of the $d$-dimensional hypersphere with radius one.  We assume that the random vector $\underline\theta=(\theta_1,...,\theta_{d-2},\phi)$, representing the orientation of the particle, has density function given by

 \begin{figure}[t]
  \begin{center}
\includegraphics[angle=0,width=1\textwidth]{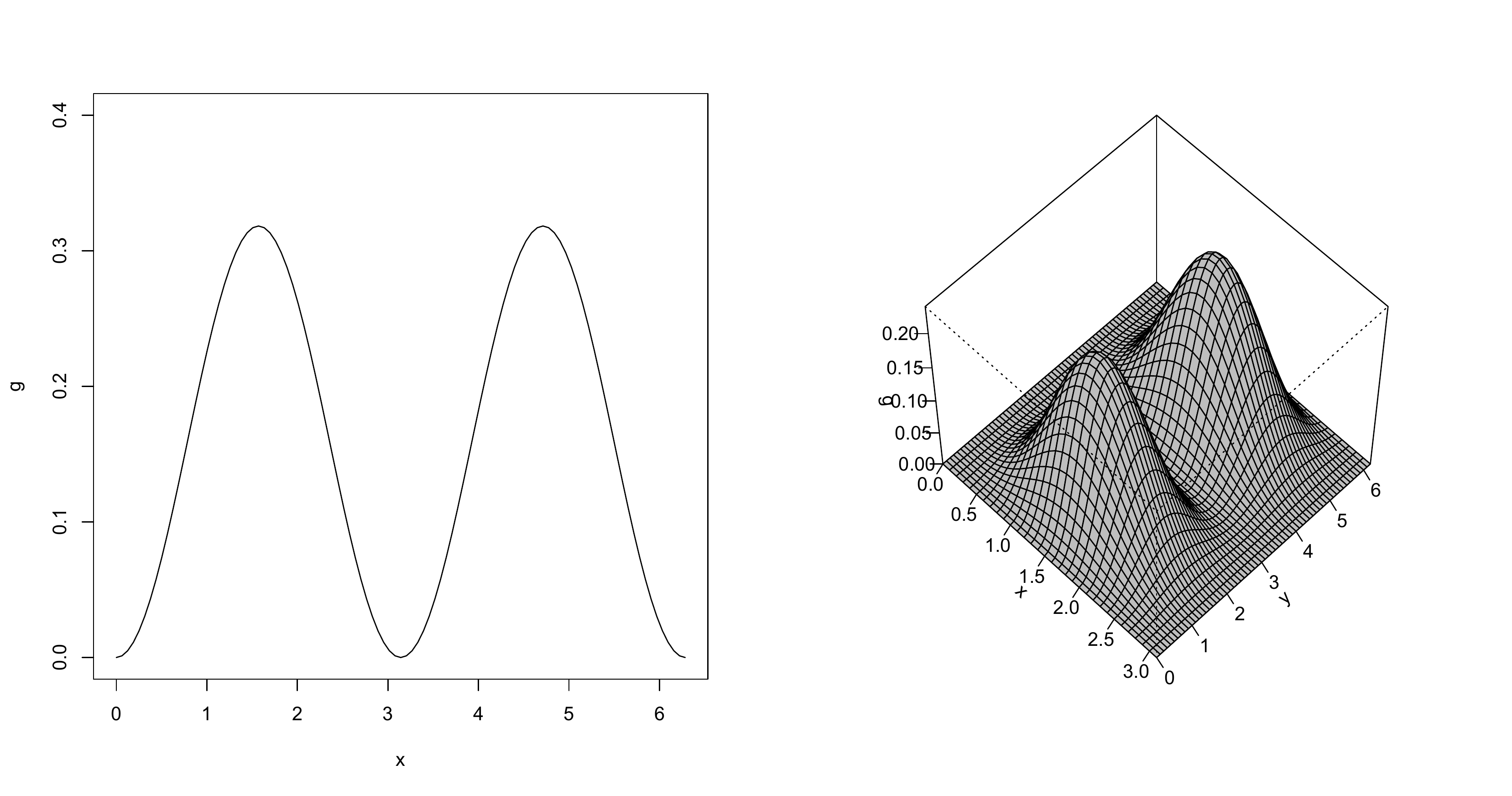}
\caption{The picture on the left represents the behavior of density function $g_{2,1}(x)=\frac1\pi\sin^2x$ with $x\in[0,2\pi]$. On the right it is depicted the density  $g_{3,1}(x,y)=\frac{3}{4\pi}\sin^3x\sin^2y$ with $x\in[0,\pi]$ and $y\in[0,2\pi]$.}\label{plotdens2}
\end{center}
\end{figure}
 
 \begin{figure}[t]
 \begin{center}
\includegraphics[angle=0,width=0.5\textwidth]{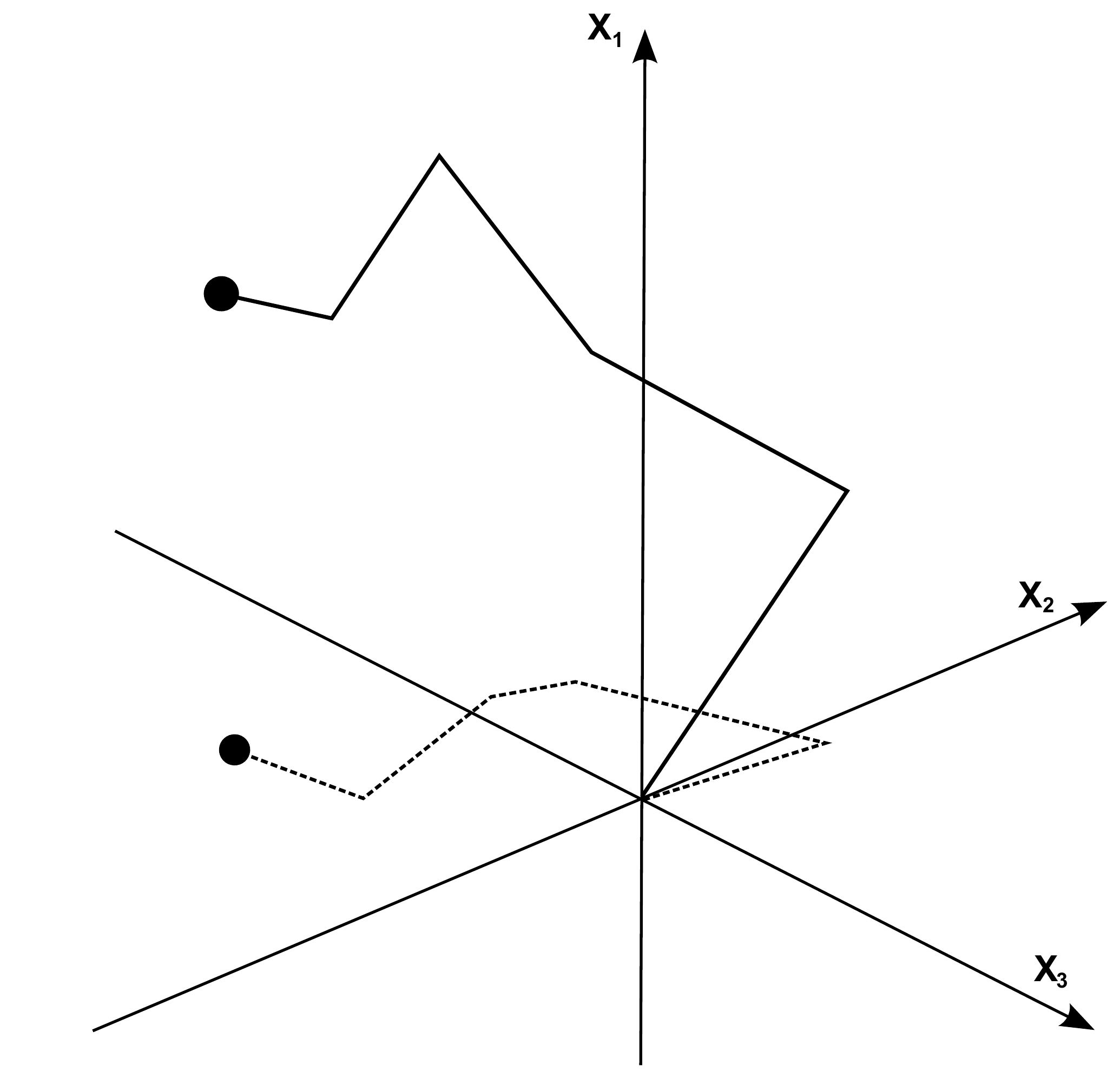}
\caption{A sample path of the three-dimensional random flight with its projection onto the plane.}
\end{center}\label{plotpath}
\end{figure}

\begin{equation}\label{eq:densdir}
g_{d,\nu}(\theta_1,\theta_2,...,\theta_{d-2},\phi)=\frac{\Gamma(\nu+d/2)}{2\pi^{\frac{d-1}{2}}\Gamma(\nu+1/2)}\sin^{2\nu+d-2}\theta_{1}\sin^{2\nu+d-3}\theta_{2}\cdot\cdot\cdot\sin^{2\nu+1}\theta_{d-2}\sin^{2\nu}\phi
\end{equation}
with $\theta_j\in[0,\pi],j=1,2,...,d-2,$ $\phi\in[0,2\pi]$ and $\nu\geq 0$. Hence, the particle chooses the direction not uniformly on the sphere, but following an asymmetric density law. Furthermore, the particle chooses a new direction independently from the previous one. In Figure \ref{plotdens2} we have depicted the behavior of \eqref{eq:densdir} for $\nu=1$ and $d=2,3$. From the picture on the left we can observe that the function \eqref{eq:densdir} is bimodal and assigns high probability mass near the points $\frac \pi2$ and $\frac32\pi$. Similar considerations hold for the picture related to $d=3$. Clearly when $\nu$ increases the function $g$ tends to concentrate around the points $\frac \pi2$ and $\frac32\pi$. For $\nu=0$ we obtain the uniform distribution over the $d$-dimensional hypersphere, that is
\begin{equation*}
g_{d,0}(\theta_1,\theta_2,...,\theta_{d-2},\phi)=\frac{\Gamma(d/2)}{2\pi^{\frac{d}{2}}}\sin\theta_{1}^{d-2}\sin\theta_{2}^{d-3}\cdot\cdot\cdot\sin\theta_{d-2}.
\end{equation*}
Another important assumption is the following: the vector $\underline\tau=(\tau_1,\tau_2,...,\tau_n)$ has distribution given by
\begin{equation}\label{eq:timedis}
f_{d,\nu}(\tau_1,...,\tau_n)=\frac{\Gamma((n+1)(2\nu+d-1))}{(\Gamma(2\nu+d-1))^{n+1}}\frac{\prod_{k=1}^{n+1}\tau_k^{2\nu+d-2}}{t^{(n+1)(2\nu+d-1)-1}}.
\end{equation}
which represents a rescaled Dirichlet random variable, with parameters $(2\nu+d-1,...,2\nu+d-1),\, d\geq 2$.

Let us denote with  $\underline{\bf X}_d(t),t>0,$ the process representing the position reached, at time $t$, by the particle following the random rules above described. In the rest of the paper we are going to analyze the random flights defined by  $\underline{\bf X}_d(t),t>0,$ and their probabilistic characteristics represent the main object of our analysis. Recalling that the motion develops at constant velocity $c$, and exploiting the hyperspherical coordinates, the $d$-dimensional random flight $\underline{\bf X}_d(t)=(X_1(t),...,X_d(t)),t>0,$ has components equal to
\begin{align}\label{polar}
&X_d(t)=c\sum_{k=1}^{n+1}\tau_k
\sin\theta_{1,k}\sin\theta_{2,k}\cdot\cdot\cdot\sin\theta_{d-2,k}\sin\phi_{k}\notag\\
&X_{d-1}(t)=c\sum_{k=1}^{n+1}\tau_k
\sin\theta_{1,k}\sin\theta_{2,k}\cdot\cdot\cdot\sin\theta_{d-2,k}\cos\phi_{k}\notag\\
&\cdot\cdot\cdot\\
&X_2(t)=c\sum_{k=1}^{n+1}\tau_k
\sin\theta_{1,k}\cos\theta_{2,k}\notag\\
&X_1(t)=c\sum_{k=1}^{n+1}\tau_k \cos\theta_{1,k}.\notag
\end{align}

The probability distribution of random flight $\underline{\bf X}_d(t),t>0,$ is concentrated inside the $d$-dimensional hypersphere with radius $ct$, which we will indicate by $\mathcal{H}_{ct}^d=\{\underline{\bf x}_d:||\underline{\bf x}_d||<ct\},$ where $\underline{\bf x}_d=(x_1,x_2,...,x_d)$ and $||\cdot||$ represents the Euclidean norm. The density law \eqref{eq:densdir} leads to a drift effect in the random motion. Indeed, since the directions are not chosen with the same probability, the particle tends to move in some portions of the space $\mathbb{R}^d$ with higher probability. Clearly the parameter $\nu$ defines the degree of drift of the motion. Therefore, if the value of $\nu$ increases, the particle will tend to move along the same part of the space. This behavior of the random motion seems to be consistent with the persistence of the real motions. 

The sample paths described by the moving particle having random position \eqref{polar} appear as straight lines with sharp turns and each steps are randomly oriented and with random length (see Figure 2).

The drawbacks emerging in the study of the random flights defined in this paper are discussed in Section 2. Therefore instead of studying directly $\underline{\bf X}_d(t),t>0,$ we analyze its projections onto the lower space $\mathbb{R}^m,m<d$. This approach leads to another random motions for which we obtain the probability distributions. 

For $\nu=1$, we provide the characteristic function for $\underline{\bf X}_d(t),t>0$ (Section 3).  Furthermore, we infer a closed-form expression for the density function of the random flight with $\nu=1$. For $n=1,2,$ it is possible to explicit completely the above density function.

The Appendix contains results on the integrals involving Bessel functions, which assume a crucial role in this paper.

\section{Motions related to the random flights}
In order to study the process $\underline{\bf X}_d(t),t>0,$ introduced in the previous Section, we take into account the same approach developed in Orsingher and De Gregorio (2007) and De Gregorio and Orsingher (2011). The first step consists in the calculation of the characteristic function of $\underline{\bf X}_d(t),t>0,$ which plays a key role in our analysis. Let us indicate by $<\cdot,\cdot>$ the scalar product, then we have that
\begin{align}\label{eq:cfgeneral}
\mathcal{F}_n^\nu(\underline{\alpha}_d)&=E\left\{e^{i<\underline{\alpha}_d,\underline{\bf X}_d(t)>}\right\}\notag\\
&=\int_0^{t}d\tau_1\int_0^{t-\tau_1}d\tau_2\cdots\int_0^{t-\sum_{k=1}^{n-1}\tau_k}d\tau_n\,f_{d,\nu}(\tau_1,...,\tau_n)\,\mathcal{I}_n^\nu(\tau_1,...,\tau_n,\underline{\alpha}_d),
\end{align}
where $\underline\alpha_d=(\alpha_1,\alpha_2,...,\alpha_d)$ and
\begin{align}
&\mathcal{I}_n^\nu(\tau_1,...,\tau_n,\underline{\alpha}_d)\\
&=\int_0^\pi d \theta_{1,1}\cdots\int_0^\pi d \theta_{1,n+1}\cdots \int_0^\pi d \theta_{d-2,1}\cdots\int_0^\pi d \theta_{d-2,n+1} \int_0^{2\pi}d \phi_{1}\cdots\int_0^{2\pi} d \phi_{n+1}\notag\\
&\quad \prod_{k=1}^{n+1}\Bigg\{\exp\{ic\tau_k(\alpha_d\sin\theta_{1,k}\sin\theta_{2,k}\cdot\cdot\cdot\sin\theta_{d-2,k}\sin\phi_{k}+\alpha_{d-1}\sin\theta_{1,k}\sin\theta_{2,k}\cdot\cdot\cdot\sin\theta_{d-2,k}\cos\phi_{k}\notag\\
&\quad+\cdots+\alpha_2\sin\theta_{1,k}\cos\theta_{2,k}+\alpha_1 \cos\theta_{1,k} ) \}\frac{\Gamma(\nu+d/2)}{2\pi^{\frac{d-1}{2}}\Gamma(\nu+1/2)}\sin\theta_{1,k}^{2\nu+d-2}\cdot\cdot\cdot\sin\theta_{d-2,k}^{2\nu+1}\sin\phi_k^{2\nu}\Bigg\}\notag.
\end{align}

The integrals with respect to the $n+1$ angles $\phi_k$ are of the form 
\begin{equation*}\label{eq:intgen}
\int_0^{2\pi}e^{iz(\alpha\cos\phi+\beta\sin\phi)}\sin^{2\nu}\phi d\phi
\end{equation*}
which seems to have an explicit solution only for particular values of $\nu$ (see Appendix). Thus the choice of the non-uniform family of distributions \eqref{eq:densdir} for the directions of the random motion, makes the analysis of the process, at least in its general setting, very complicated. Clearly for $\nu=0$ we get the uniform case studied in the paper mentioned in the Introduction. 

For this reason, instead of studying directly $\underline{\bf X}_{d}(t)$, we deal with the random process $\underline{\bf X}_{m}^d(t)=(X_1(t),...,X_{m}(t)),t>0,$ namely the projection of 
$\underline{\bf X}_{d}(t)$ onto the lower space $\mathbb{R}^{m}$, with $1\leq m<d$, having components equal to
\begin{align}\label{polar2}
&X_m(t)=c\sum_{k=1}^{n+1}\tau_k
\sin\theta_{1,k}\sin\theta_{2,k}\cdot\cdot\cdot\sin\theta_{m-1,k}\cos\theta_{m,k}\notag\\
&X_{m-1}(t)=c\sum_{k=1}^{n+1}\tau_k
\sin\theta_{1,k}\sin\theta_{2,k}\cdot\cdot\cdot\sin\theta_{m-2,k}\cos\theta_{m-1,k}\notag\\
&\cdot\cdot\cdot\\
&X_2(t)=c\sum_{k=1}^{n+1}\tau_k
\sin\theta_{1,k}\cos\theta_{2,k}\notag\\
&X_1(t)=c\sum_{k=1}^{n+1}\tau_k \cos\theta_{1,k}.\notag
\end{align}
The vector $(\theta_1,\theta_2,...,\theta_m)$ (with $\theta_{d-1}=\phi$) appearing in \eqref{polar2}, has distribution given by the marginal density of $g_{d,\nu}(\theta_1,\theta_2,...,\theta_{d-2},\phi)$, that is
\begin{align}\label{eq:densorient2}
g_{d,\nu}(\theta_1,\theta_2,...,\theta_{m})&=\int_0^\pi d\theta_{m+1}\cdots\int_0^\pi d\theta_{d-2}\int_0^{2\pi} d\phi g_{d,\nu}(\theta_1,\theta_2,...,\theta_{d-2},\phi)\notag\\
&=\begin{cases}
g_{d,\nu}(\theta_1,\theta_2,...,\theta_{d-2},\phi), &m=d-1\\\\
\frac{\Gamma(\nu+d/2)}{\pi^{\frac{m}{2}}\Gamma(\nu+\frac{d-m}{2})}\sin\theta_{1}^{2\nu+d-2}\cdot\cdot\cdot\sin\theta_{m}^{2\nu+d-m-1}, & m<d-1
\end{cases}
\end{align}
Therefore, $\underline{\bf X}_{m}^d(t)$ can be interpreted as the shadow of \eqref{polar} in the lower spaces (see for instance Figure 2).
In other words, if we observe in $\mathbb{R}^{m}$ the particle moving in $\mathbb{R}^{d}$ according to the random rules of $\underline{\bf X}_d(t)$, we perceive a random flight with components \eqref{polar2} having vector velocity   
$$\vec{v}=\left(
 \begin{array}{l} 
      c\sin\theta_1\sin\theta_2\cdot\cdot\cdot\sin\theta_{m-1}\cos\theta_m \\
     c\sin\theta_1\sin\theta_2\cdot\cdot\cdot\sin\theta_{m-2}\cos\theta_{m-1} \\
     ...\\
     c\sin\theta_1\cos\theta_2\\
     c\cos\theta_1
   \end{array}    
    \right)$$
with random intensity $|\vec{v}|$. Then, in what follows we analyze the random motion $\underline{\bf X}_{m}^d(t),t>0,$ with orientations distributed according to \eqref{eq:densorient2}. Our first result concerns the characteristic function of \eqref{polar2}.

\begin{theorem} \label{teo1}
The characteristic function of $\underline{\bf X}_{m}^d(t),t>0,$ is equal to 
\begin{align}\label{eq:fc}
\mathcal{F}_{n,d}^\nu(\underline{\alpha}_{m})&=E\left\{e^{i<\underline{\alpha}_m,\underline{\bf X}_m^d(t)>}\right\}\notag\\
&=\frac{2^{\frac{n+1}{2}(2\nu+d-1)-\frac12}\Gamma(\frac{n+1}{2}(2\nu+d-1)+\frac12)}{(ct||\underline{\alpha}_m||)^{\frac{n+1}{2}(2\nu+d-1)-\frac12}}J_{\frac{n+1}{2}(2\nu+d-1)-\frac12}(ct||\underline{\alpha}_m||),
\end{align}
where $J_\mu(x)=\sum_{k=0}^\infty\frac{(-1)^k}{k!\Gamma(k+\mu+1)}\left(\frac{x}{2}\right)^{2k+\mu},\mu\in\mathbb{R},$ is the Bessel function.
\end{theorem}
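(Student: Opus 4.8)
The plan is to follow the route of Orsingher--De Gregorio (2007) and De Gregorio--Orsingher (2011): condition on the time lengths, use the independence of the $n+1$ orientations to factorize the conditional characteristic function, evaluate each single-leg angular integral in closed form, and finally integrate the resulting product against the Dirichlet density \eqref{eq:timedis}. Throughout I write $\rho=||\underline{\alpha}_m||$, $\beta=2\nu+d-1$ and $\mu_0=\nu+\frac{d}{2}-1=\frac{\beta-1}{2}$. First I would condition on $\underline{\tau}=(\tau_1,\dots,\tau_n)$ in \eqref{eq:cfgeneral}. Since the orientation on each of the $n+1$ legs is chosen independently of the others and of $\underline{\tau}$, the conditional characteristic function factorizes as $\prod_{k=1}^{n+1}\Psi(c\tau_k\rho)$, where $\Psi(z)$ is the characteristic function of a single projected direction $\underline{u}^{(m)}=(u_1,\dots,u_m)$ distributed according to the marginal law \eqref{eq:densorient2}, evaluated at an argument of Euclidean length $z$.

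The core computation is the single-leg integral. I would pass from the angular variables $(\theta_1,\dots,\theta_m)$ to the Cartesian coordinates $(u_1,\dots,u_m)$ of \eqref{polar2}. Since $u_j$ depends only on $\theta_1,\dots,\theta_j$, this map is lower triangular, so its Jacobian is the product of the diagonal entries and equals, in absolute value, $\prod_{i=1}^{m}\sin^{m-i+1}\theta_i$; together with the identity $\prod_{i=1}^{m}\sin^2\theta_i=1-||\underline{u}^{(m)}||^2$ this turns \eqref{eq:densorient2} into
\[
f(\underline{u}^{(m)})=\frac{\Gamma(\nu+d/2)}{\pi^{m/2}\Gamma(\nu+\frac{d-m}{2})}\bigl(1-||\underline{u}^{(m)}||^2\bigr)^{\nu+\frac{d-m}{2}-1},\qquad ||\underline{u}^{(m)}||<1 .
\]
The key structural observation is that this density is \emph{radially symmetric} on the unit ball of $\mathbb{R}^m$: even though \eqref{eq:densdir} is anisotropic on the sphere, its projection onto $\mathbb{R}^m$ loses all directional bias, and $f$ coincides with the law of the first $m$ coordinates of a point uniform on the sphere $S^{2\nu+d-1}\subset\mathbb{R}^{2\nu+d}$. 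Hence $\Psi$ is the characteristic function of such a uniform law, and the classical formula for the Fourier transform of the uniform distribution on a sphere gives
\[
\Psi(c\tau_k\rho)=2^{\mu_0}\Gamma(\mu_0+1)\,\frac{J_{\mu_0}(c\tau_k\rho)}{(c\tau_k\rho)^{\mu_0}} .
\]
The case $m=d-1$ is handled by the same change of variables, the azimuthal angle $\phi\in[0,2\pi]$ merely contributing a harmless $2$-to-$1$ factor absorbed into the normalizing constant.

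It remains to integrate $\prod_{k=1}^{n+1}\Psi(c\tau_k\rho)$ against \eqref{eq:timedis}. Inserting both expressions and using $2\nu+d-2=2\mu_0$ together with $\beta-1-\mu_0=\mu_0$, the powers of $\tau_k$ collapse and the problem reduces, up to an explicit constant, to the $(n+1)$-fold integral $K=\int_{\sum\tau_k=t}\prod_{k=1}^{n+1}\tau_k^{\mu_0}J_{\mu_0}(c\rho\tau_k)\,d\tau_1\cdots d\tau_n$, which is precisely the $(n+1)$-fold convolution on $\mathbb{R}_+$ of the function $s\mapsto s^{\mu_0}J_{\mu_0}(c\rho s)$.

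The main obstacle is the evaluation of $K$. I would establish the two-term identity
\[
\int_0^t s^{\mu}J_{\mu}(as)\,(t-s)^{\eta}J_{\eta}(a(t-s))\,ds\;\propto\;a^{-1/2}\,t^{\mu+\eta+1/2}\,J_{\mu+\eta+1/2}(at),
\]
a Sonine--Gegenbauer type formula supplied by the Appendix, and iterate it $n$ times, each convolution raising the order by $\mu_0+\frac12$. After $n$ steps the order is $(n+1)\mu_0+\frac{n}{2}=\frac{n+1}{2}(2\nu+d-1)-\frac12=:\lambda$, giving $K\propto(c\rho)^{-n/2}t^{\lambda}J_{\lambda}(c\rho t)$. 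Substituting $K$ back and carefully collecting the Gamma-function constants -- the genuinely delicate bookkeeping step -- I expect the factors $\Gamma(\mu_0+1)^{n+1}$, $\Gamma(\beta)^{n+1}$ and $\Gamma((n+1)\beta)$ to telescope into the single constant $2^{\lambda}\Gamma(\lambda+1)$ of \eqref{eq:fc} (recall $\lambda+1=\frac{n+1}{2}(2\nu+d-1)+\frac12$), and the remaining powers of $c$, $t$ and $\rho$ to assemble into $(ct\rho)^{-\lambda}$, completing the proof.
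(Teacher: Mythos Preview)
Your proposal is correct and leads to \eqref{eq:fc}, but the route you take for the single-leg angular integral is genuinely different from the paper's. The paper evaluates
\[
\mathcal{I}_{n,d}^\nu(\tau_1,\dots,\tau_n,\underline{\alpha}_m)
\]
by integrating out the angles one at a time: first $\theta_{m,k}$ via the Poisson representation \eqref{irbess}, then $\theta_{m-1,k},\dots,\theta_{1,k}$ successively via the Sonine formula \eqref{formula4}, each step raising the Bessel order by $\tfrac12$ until the product form
\[
\left\{2^{\nu+\frac d2-1}\Gamma\!\left(\nu+\tfrac d2\right)\right\}^{n+1}\prod_{k=1}^{n+1}\frac{J_{\nu+\frac d2-1}(c\tau_k||\underline{\alpha}_m||)}{(c\tau_k||\underline{\alpha}_m||)^{\nu+\frac d2-1}}
\]
emerges. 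You instead push the marginal density \eqref{eq:densorient2} through the lower-triangular change of variables $(\theta_1,\dots,\theta_m)\mapsto(u_1,\dots,u_m)$ and observe that the resulting law on the unit ball is radially symmetric, equal to $\tfrac{\Gamma(\nu+d/2)}{\pi^{m/2}\Gamma(\nu+(d-m)/2)}(1-||\underline{u}^{(m)}||^2)^{\nu+(d-m)/2-1}$, so that a single application of the classical Bochner/Fourier formula for radial densities yields $\Psi$ directly. This is shorter and conceptually sharper: it explains \emph{why} the projected process is isotropic (Remark~\ref{rem}) rather than discovering it a posteriori, and it makes transparent that only the combination $\mu_0=\nu+\tfrac d2-1$ matters. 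The price is that you invoke a Fourier identity the paper does not state, and your interpretation via the uniform law on $S^{2\nu+d-1}$ is only literal when $2\nu\in\mathbb{N}$; the underlying integral identity, however, holds for all $\nu\geq0$. For the Dirichlet time integration both arguments coincide: you iterate \eqref{formula1} exactly as the paper does, and the final simplification of constants in the paper is carried out through two instances of the Legendre duplication formula, which is what your ``telescoping'' step needs.
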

\begin{proof}
The characteristic function of  $\underline{\bf X}_{m}^d(t),t>0,$ becomes
\begin{align*}
\mathcal{F}_{n,d}^\nu(\underline{\alpha}_m)&=E\left\{e^{i<\underline{\alpha}_m,\underline{\bf X}_m^d(t)>}\right\}\\
&=\int_0^{t}d\tau_1\int_0^{t-\tau_1}d\tau_2\cdots\int_0^{t-\sum_{k=1}^{n-1}\tau_k}d\tau_n\,f_{d,\nu}(\tau_1,...,\tau_n)\,\mathcal{I}_{n,d}^\nu(\tau_1,...,\tau_n,\underline{\alpha}_m)
\end{align*}
with

\begin{align}\label{eq:intpr}
&\mathcal{I}_{n,d}^\nu(\tau_1,...,\tau_n,\underline{\alpha}_{m})\\
&=\int_0^\pi d \theta_{1,1}\cdots\int_0^\pi d \theta_{1,n+1}\cdots \int_0^\pi d \theta_{m,1}\cdots\int_0^\pi d \theta_{m,n+1} \notag\\
&\quad \prod_{k=1}^{n+1}\Bigg\{\exp\{ic\tau_k(\alpha_{m}\sin\theta_{1,k}\sin\theta_{2,k}\cdot\cdot\cdot\sin\theta_{m-1,k}\cos\theta_{m,k}+\cdots+\alpha_2\sin\theta_{1,k}\cos\theta_{2,k}+\alpha_1 \cos\theta_{1,k} ) \}\notag\\
&\quad\times\frac{\Gamma(\nu+d/2)}{\pi^{\frac{m}{2}}\Gamma(\nu+\frac{d-m}{2})}\sin\theta_{1,k}^{2\nu+d-2}\cdot\cdot\cdot\sin\theta_{m,k}^{2\nu+d-m-1}\Bigg\}\notag.
\end{align}
Since
\begin{eqnarray}\label{irbess}
J_\nu(z)&=&\frac{\left(\frac z2\right)^\nu}{\Gamma\left(\nu+\frac12\right)\Gamma\left(\frac12\right)}\int_0^\pi e^{iz\cos\phi}\sin^{2\nu}\phi d\phi,\quad Re\left(\nu+\frac12\right)>0,
\end{eqnarray}
we observe that, after the integrations with respect to $\theta_{m,k}, k=1,...,n+1,$ \eqref{eq:intpr} becomes
\begin{align*}
&\mathcal{I}_{n,d}^\nu(\tau_1,...,\tau_n,\underline{\alpha}_{m})\\
&=\int_0^\pi d \theta_{1,1}\cdots\int_0^\pi d \theta_{1,n+1}\cdots \int_0^\pi d \theta_{m-1,1}\cdots\int_0^\pi d \theta_{m-1,n+1}\notag\\
&\quad \prod_{k=1}^{n+1}\Bigg\{\exp\{ic\tau_k(\alpha_{m-1}\sin\theta_{1,k}\cos\theta_{2,k}\cdot\cdot\cdot\sin\theta_{m-2,k}\cos\theta_{m-1,k}+\cdots+\alpha_2\sin\theta_{1,k}\cos\theta_{2,k}+\alpha_1 \cos\theta_{1,k} ) \}\notag\\
&\quad\sin\theta_{1,k}^{2\nu+d-2}\cdot\cdot\cdot\sin\theta_{m-1,k}^{2\nu+d-m}\Bigg\}\notag\\
&\quad\prod_{k=1}^{n+1}\left\{\frac{\Gamma(\nu+d/2)}{\pi^{\frac{m}{2}}\Gamma(\nu+\frac{d-m}{2})}\int_0^\pi \exp\{ic\tau_k(\alpha_{m}\sin\theta_{1,k}\cos\theta_{2,k}\cdot\cdot\cdot\sin\theta_{m-1,k}\cos\theta_{m,k} ) \}\sin\theta_{m,k}^{2\nu+d-m-1}\right\}\notag\\
&=\int_0^\pi d \theta_{1,1}\cdots\int_0^\pi d \theta_{1,n+1}\cdots \int_0^\pi d \theta_{m-1,1}\cdots\int_0^\pi d \theta_{m-1,n+1}\notag\\
&\quad \prod_{k=1}^{n+1}\Bigg\{\exp\{ic\tau_k(\alpha_{m-1}\sin\theta_{1,k}\cos\theta_{2,k}\cdot\cdot\cdot\sin\theta_{m-2,k}\cos\theta_{m-1,k}+\cdots+\alpha_2\sin\theta_{1,k}\cos\theta_{2,k}+\alpha_1 \cos\theta_{1,k} ) \}\notag\\
&\quad\times\frac{\Gamma(\nu+d/2)}{\pi^{\frac{m-1}{2}}}\sin\theta_{1,k}^{\nu+d-2-\frac{d-m-1}{2}}\cdot\cdot\cdot\sin\theta_{m-1,k}^{\nu+d-m-\frac{d-m-1}{2}}\\
&\quad\times\left(\frac{2}{c\tau_k\alpha_m}\right)^{\nu+\frac{d-m-1}{2}} J_{\nu+\frac{d-m-1}{2}}(c\tau_k\alpha_{m}\sin\theta_{1,k}\sin\theta_{2,k}\cdot\cdot\cdot\sin\theta_{m-1,k})\Bigg\}\notag.
\end{align*}

We are able to perform all the $(m-1)(n+1)$ integrations with
respect to the angles $\theta_{i,k},1\leq i \leq m-1,k=1,...,n+1,$
by applying successively the formula \eqref{formula4} in the Appendix. The integration with respect to
$\theta_{m-1,k},~k=1,...,n+1$ yields
\begin{equation*}\label{eleven}
\begin{split}
&\int_0^\pi d\theta_{m-1,1}\cdot\cdot\cdot \int_0^\pi
d\theta_{m-1,n+1}\prod_{k=1}^{n+1}e^{ic \tau_k\alpha_{m-1}
\sin\theta_{1,k}\cdot\cdot\cdot\sin\theta_{m-2,k}\cos\theta_{m-1,k}}\sin\theta_{m-1,k}^{\nu+\frac{d-m+1}{2}}\\
&J_{\nu+\frac{d-m-1}{2}}(c\tau_k\alpha_{m}\sin\theta_{1,k}\sin\theta_{2,k}\cdot\cdot\cdot\sin\theta_{m-1,k})\frac{1}{\alpha_m^{\nu+\frac{d-m-1}{2}}}
\\
&=\prod_{k=1}^{n+1}\Bigg\{\int_0^\pi e^{ic
\tau_k\alpha_{m-1}
\sin\theta_{1,k}\cdot\cdot\cdot\sin\theta_{m-2,k}\cos\theta_{m-1,k}}\sin\theta_{m-1,k}^{\nu+\frac{d-m+1}{2}}\\
&\quad\times \frac{1}{\alpha_m^{\nu+\frac{d-m-1}{2}}}J_{\nu+\frac{d-m-1}{2}}(c\tau_k\alpha_{m}\sin\theta_{1,k}\sin\theta_{2,k}\cdot\cdot\cdot\sin\theta_{m-1,k})d\theta_{m-1,k}\Bigg\}\\
&=\prod_{k=1}^{n+1}\Bigg\{2\int_0^{\pi/2}\cos (c
\tau_k\alpha_{m-1}
\sin\theta_{1,k}\cdot\cdot\cdot\sin\theta_{m-2,k}\cos\theta_{m-1,k})\\
&\quad\sin\theta^{\nu+\frac{d-m+1}{2}}_{m-1,k}\frac{1}{\alpha_m^{\nu+\frac{d-m-1}{2}}} J_{\nu+\frac{d-m-1}{2}}(c\tau_k\alpha_{m}\sin\theta_{1,k}\sin\theta_{2,k}\cdot\cdot\cdot\sin\theta_{m-1,k})d\theta_{m-1,k}\Bigg\}\\
&=\prod_{k=1}^{n+1}\left\{2\sqrt{\frac{\pi}{2}}\frac{ J_{\nu+\frac{d-m}{2}}\left(c\tau_k\sin\theta_{1,k}\cdot\cdot\cdot\sin\theta_{m-2,k}\sqrt{\alpha_{m}^2
+\alpha_{m-1}^2}\right)}{(c\tau_k\sin\theta_{1,k}\cdot\cdot\cdot\sin\theta_{m-2,k})^\frac12\left(\sqrt{\alpha_{m}^2
+\alpha_{m-1}^2}\right)^{\nu+\frac{d-m}{2}}}\right\}.
\end{split}
\end{equation*}

In the last step we applied formula \eqref{formula4} for
$
a=c\tau_k\alpha_{m}\sin\theta_{1,k}\cdot\cdot\cdot\sin\theta_{m-2,k},$\,$b=c\tau_k\alpha_{m-1}\sin\theta_{1,k}\cdot\cdot\cdot\sin\theta_{m-2,k}
$
and also considered that
\[
\int_0^\pi \sin(\beta\cos x )(\sin x)^{\nu+1}J_\nu(\alpha\sin x)dx=0.
\]

 The integration with respect to
the variables $\theta_{m-2,1},...,\theta_{m-2,n+1}$ follows
similarly by applying again \eqref{formula4} and yields
\begin{equation*}\label{twelve}
\begin{split}
&\int_0^\pi d\theta_{m-2,1}\cdot\cdot\cdot \int_0^\pi
d\theta_{m-2,n+1}\prod_{k=1}^{n+1}e^{ic \tau_k\alpha_{m-2}
\sin\theta_{1,k}\cdot\cdot\sin\theta_{m-3,k}\cos\theta_{m-2,k}}\sin\theta^{\nu+\frac{d-m+3}{2}}_{m-2,k}\\
&\left\{2\sqrt{\frac{\pi}{2}}\frac{ J_{\nu+\frac{d-m}{2}}\left(c\tau_k\sin\theta_{1,k}\cdot\cdot\cdot\sin\theta_{m-2,k}\sqrt{\alpha_{m}^2
+\alpha_{m-1}^2}\right)}{(c\tau_k\sin\theta_{1,k}\cdot\cdot\cdot\sin\theta_{m-2,k})^\frac12\left(\sqrt{\alpha_{m}^2
+\alpha_{m-1}^2}\right)^{\nu+\frac{d-m}{2}}}\right\}\\
&=\prod_{k=1}^{n+1}\left(2\sqrt{\frac{\pi}{2}}\right)^2\frac{ J_{\nu+\frac{d-m+1}{2}}\left(c\tau_k\sin\theta_{1,k}\cdot\cdot\sin\theta_{m-3,k}\sqrt{\alpha_{m}^2
+\alpha_{m-1}^2+\alpha_{m-2}^2}\right)}{c\tau_k\sin\theta_{1,k}\cdot\cdot\sin\theta_{m-3,k}\left(\sqrt{\alpha_{m}^2
+\alpha_{m-1}^2+\alpha_{m-2}^2}\right)^{\nu+\frac{d-m+1}{2}}}.
\end{split}
\end{equation*}

Continuing in the same way we obtain that
\begin{equation}\label{intangle}
\mathcal{I}_{n,d}^\nu(\tau_1,...,\tau_n,\underline{\alpha}_{m})=\left\{2^{\nu+\frac d2-1}\Gamma\left(\nu+\frac d2\right)\right\}^{n+1}\prod_{k=1}^{n+1}\frac{ J_{\nu+\frac d2-1}(c\tau_k||\underline{\alpha}_{m}||)}{(c\tau_k||\underline{\alpha}_{m}||)^{\nu+\frac d2-1}}.
\end{equation}

Then we can write that
\begin{align}\label{eq:intdrift}
\mathcal{F}_{n,d}^\nu(\underline{\alpha}_{m})&=\frac{\Gamma((n+1)(2\nu+d-1))}{(\Gamma(2\nu+d-1))^{n+1}}\frac{\left\{2^{\nu+\frac d2-1}\Gamma\left(\nu+\frac d2\right)\right\}^{n+1}}{t^{(n+1)(2\nu+d-1)-1}}\notag\\
&\quad\int_0^{t}d\tau_1\int_0^{t-\tau_1}d\tau_2\cdots\int_0^{t-\sum_{k=1}^{n-1}\tau_k}d\tau_n\prod_{k=1}^{n+1}\tau_k^{\nu+\frac d2-1}\frac{J_{\nu+\frac d2-1}(c\tau_k||\underline{\alpha}_{m}||)}{(c||\underline{\alpha}_{m}||)^{\nu+\frac d2-1}}.
\end{align}

In order to work out this $n$-fold integral, the result \eqref{formula1} assumes a crucial role. Indeed, we apply recursively the formula \eqref{formula1} to calculate each integral with respect to the variables $\tau_j$. Therefore, in the first step we have that
\begin{align*}
&\int_0^{t-\sum_{k=1}^{n-1}\tau_k}\frac{d\tau_n}{(c||\underline{\alpha}_m||)^{2\nu+d-2}}\tau_{n}^{\nu+\frac d2-1}(t-\sum_{k=1}^{n}\tau_k)^{\nu+\frac d2-1}J_{\nu+\frac d2-1}(c\tau_n||\underline{\alpha}_m||)J_{\nu+\frac d2-1}(c(t-\sum_{k=1}^{n}\tau_k)||\underline{\alpha}_m||)\\
&=\int_0^{t-\sum_{k=1}^{n-1}\tau_k}\frac{d \tau_n}{(c||\underline{\alpha}_m||)^{4\nu+2d-4}}(c\tau_n||\underline{\alpha}_m||)^{\nu+\frac d2-1}(c(t-\sum_{k=1}^{n}\tau_k)||\underline{\alpha}_m||)^{\nu+\frac d2-1}\\
&\quad J_{\nu+\frac d2-1}(c\tau_n||\underline{\alpha}_m||)J_{\nu+\frac d2-1}(c(t-\sum_{k=1}^{n}\tau_k)||\underline{\alpha}_m||)=(c\tau_n||\underline{\alpha}_m||=y )\\
&=\frac{1}{(c||\underline{\alpha}_m||)^{4\nu+2d-3}}\int_0^{c(t-\sum_{k=1}^{n-1}\tau_k)||\underline{\alpha}_m||}dyy^{\nu+\frac d2-1}(c(t-\sum_{k=1}^{n-1}\tau_k)||\underline{\alpha}_m||-y)^{\nu+\frac d2-1}\\
&\quad J_{\nu+\frac d2-1}(y)J_{\nu+\frac d2-1}(c(t-\sum_{k=1}^{n-1}\tau_k)||\underline{\alpha}_d||-y)\\
&=\frac{1}{(c||\underline{\alpha}_m||)^{4\nu+2d-3}}\frac{\left(\Gamma\left(\nu+\frac{d-1}{2}\right)\right)^2}{\sqrt{2\pi}\Gamma(2\nu+d-1)}(c(t-\sum_{k=1}^{n-1}\tau_k)||\underline{\alpha}_m||)^{2\nu+d-\frac32}J_{2\nu+d-\frac 32}(c(t-\sum_{k=1}^{n-1}\tau_k)||\underline{\alpha}_m||).
\end{align*}

The second integral is given by
\begin{eqnarray*}
&&\frac{1}{(c||\underline{\alpha}_m||)^{6\nu+3d-5}}\frac{\left(\Gamma\left(\nu+\frac{d-1}{2}\right)\right)^2}{\sqrt{2\pi}\Gamma(2\nu+d-1)}
\int_0^{t-\sum_{k=1}^{n-2}\tau_k}d\tau_{n-1}\\
&&(c\tau_{n-1}||\underline{\alpha}_m||)^{\nu+\frac d2-1}(c(t-\sum_{k=1}^{n-1}\tau_k)||\underline{\alpha}_m||)^{2\nu+d-\frac32}J_{\nu+\frac d2-1}(c\tau_{n-1}||\underline{\alpha}_m||)J_{2\nu+d-\frac 32}(c(t-\sum_{k=1}^{n-1}\tau_k)||\underline{\alpha}_m||)\\
&&=(c\tau_n||\underline{\alpha}_m||=y )\\
&&=\frac{1}{(c||\underline{\alpha}_m||)^{6\nu+3d-4}}\frac{\left(\Gamma\left(\nu+\frac{d-1}{2}\right)\right)^2}{\sqrt{2\pi}\Gamma(2\nu+d-1)}
\int_0^{c(t-\sum_{k=1}^{n-2}\tau_k)||\underline{\alpha}_m||}dy\\
&&\quad y^{\nu+\frac d2-1}(c(t-\sum_{k=1}^{n-2}\tau_k)||\underline{\alpha}_m||-y)^{2\nu+d-\frac32}J_{\nu+\frac d2-1}(y)J_{2\nu+d-\frac 32}(c(t-\sum_{k=1}^{n-2}\tau_k)||\underline{\alpha}_m||-y)\\
&&=\frac{\left(\Gamma\left(\nu+\frac{d-1}{2}\right)\right)^3}{(\sqrt{2\pi})^2\Gamma(3\nu+\frac32(d-1))}\frac{(c(t-\sum_{k=1}^{n-2}\tau_k)||\underline{\alpha}_m||)^{3\nu+\frac32d-2}}{(c||\underline{\alpha}_m||)^{6\nu+3d-4}}J_{3\nu+\frac32d-2}(c(t-\sum_{k=1}^{n-2}\tau_k)||\underline{\alpha}_m||).
\end{eqnarray*}

Then, the last integral becomes
\begin{align}\label{eq:lastint}
&\frac{\left(\Gamma\left(\nu+\frac{d-1}{2}\right)\right)^n}{(\sqrt{2\pi})^{n-1}\Gamma(n\nu+\frac n2(d-1))}\frac{1}{(c||\underline{\alpha}_m||)^{(n+1)(2\nu+d-1)-2}}
\int_0^{t}d\tau_{1}\\
&(c\tau_{1}||\underline{\alpha}_m||)^{\nu+\frac d2-1}(c(t-\tau_1)||\underline{\alpha}_n||)^{n\nu+\frac n2(d-1)-\frac12}J_{\nu+\frac d2-1}(c\tau_{1}||\underline{\alpha}_m||)J_{n\nu+\frac n2(d-1)-\frac12}(c(t-\tau_1)||\underline{\alpha}_m||)\notag\\
&=(c\tau_1||\underline{\alpha}_m||=y )\notag\\
&=\frac{\left(\Gamma\left(\nu+\frac{d-1}{2}\right)\right)^n}{(\sqrt{2\pi})^{n-1}\Gamma(n\nu+\frac n2(d-1))}\frac{1}{(c||\underline{\alpha}_m||)^{(n+1)(2\nu+d-1)-1}}
\int_0^{ct||\underline{\alpha}_m||}dy\notag\\
&\quad y^{\nu+\frac d2-1}(ct||\underline{\alpha}_m||-y)^{n\nu+\frac n2(d-1)-\frac12}J_{\nu+\frac d2-1}(y)J_{n\nu+\frac n2(d-1)-\frac12}(ct||\underline{\alpha}_m||-y)\notag\\
&=\frac{\left(\Gamma\left(\nu+\frac{d-1}{2}\right)\right)^{n+1}}{(\sqrt{2\pi})^n\Gamma\left((n+1)(\nu+\frac{(d-1)}{2})\right)}\frac{(ct||\underline{\alpha}_m||)^{(n+1)(\nu+\frac{(d-1)}{2})-\frac12}}{(c||\underline{\alpha}_m||)^{(n+1)(2\nu+d-1)-1}}J_{(n+1)(\nu+\frac{(d-1)}{2})-\frac12}(ct||\underline{\alpha}_m||)\notag.
\end{align}
Therefore, plugging the result \eqref{eq:lastint} into the expression \eqref{eq:intdrift}, and by observing that
$$\Gamma\left(\nu+\frac d2\right)=\sqrt{\pi}2^{2-d-2\nu}\frac{\Gamma(2\nu+d-1)}{\Gamma(\nu+\frac{d-1}{2})}$$
and 
$$\Gamma\left(\frac{(n+1)}{2}(2\nu+d-1)+\frac12\right)=\sqrt{\pi}2^{1-(n+1)(2\nu+d-1)}\frac{\Gamma((n+1)(2\nu+d-1))}{\Gamma(\frac{(n+1)}{2}(2\nu+d-1))},$$
some manipulations lead to \eqref{eq:fc}.

\end{proof}


Let us denote by $\underline{\bf x}_m=(x_1,x_2,...,x_m)$ and $d\underline{\bf x}_m=(dx_1,dx_2,...,dx_m)$. Now, by means of \eqref{eq:fc}, we derive the explicit probability distribution of the random motion $\underline{\bf X}_m^d(t),t>0,$ which is concentrated in the hypersphere $\mathcal{H}_{ct}^m$.  

\begin{theorem}\label{th2}
The probability law of $\underline{\bf X}_m^d(t),t>0,$ is equal to
\begin{align}\label{condlaw}
p_{n,d}^\nu(\underline{\bf x}_m,t)&=\frac{P\{\underline{\bf X}_m^d(t)\in d\underline{\bf x}_m \}}{\prod_{i=1}^md x_i}\notag\\
&=\frac{\Gamma(\frac{n+1}{2}(2\nu+d-1)+\frac12)}{\Gamma(\frac{n+1}{2}(2\nu+d-1)-\frac m2+\frac12)}\frac{(c^2t^2- ||\underline{\bf x}_m||^2)^{\frac{n+1}{2}(2\nu+d-1)-\frac {m+1}{2}}
}{\pi^{m/2}(ct)^{(n+1)(2\nu+d-1)-1}},
\end{align}
with $||\underline{\bf x}_m||<ct$ and $d\geq 2$.
\end{theorem}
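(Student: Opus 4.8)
The plan is to invert the characteristic function \eqref{eq:fc}, exploiting that it is a radial function of $\underline{\alpha}_m$: it depends on $\underline{\alpha}_m$ only through $\rho:=\|\underline{\alpha}_m\|$. Writing $\lambda:=\tfrac{n+1}{2}(2\nu+d-1)-\tfrac12$, so that \eqref{eq:fc} reads $\mathcal{F}_{n,d}^\nu(\underline{\alpha}_m)=2^\lambda\Gamma(\lambda+1)(ct\rho)^{-\lambda}J_\lambda(ct\rho)$, the density is recovered from
$$p_{n,d}^\nu(\underline{\bf x}_m,t)=\frac{1}{(2\pi)^m}\int_{\mathbb{R}^m}e^{-i\langle\underline{\alpha}_m,\underline{\bf x}_m\rangle}\mathcal{F}_{n,d}^\nu(\underline{\alpha}_m)\,d\underline{\alpha}_m.$$
First I would carry out the angular part. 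Because the integrand is radial, I integrate over the sphere of radius $\rho$ by means of the classical plane-wave identity
$$\int_{S^{m-1}}e^{i\rho r\langle\omega,\xi\rangle}\,d\omega=(2\pi)^{m/2}\frac{J_{m/2-1}(r\rho)}{(r\rho)^{m/2-1}},\qquad r:=\|\underline{\bf x}_m\|,$$
which collapses the $m$-fold integral to the one-dimensional Hankel-type integral
$$p_{n,d}^\nu(\underline{\bf x}_m,t)=\frac{1}{(2\pi)^{m/2}r^{m/2-1}}\int_0^\infty\rho^{m/2}\,\mathcal{F}_{n,d}^\nu(\rho)\,J_{m/2-1}(r\rho)\,d\rho.$$
Inserting \eqref{eq:fc}, everything reduces to the two-Bessel integral $\int_0^\infty\rho^{m/2-\lambda}J_\lambda(ct\rho)J_{m/2-1}(r\rho)\,d\rho$, a Weber--Schafheitlin integral whose discontinuous character is exactly what confines the mass to $\mathcal{H}_{ct}^m$: it vanishes for $r>ct$ and, for $r<ct$, its Gauss hypergeometric value collapses to the power $(c^2t^2-r^2)^{\lambda-m/2}$ through a parameter reduction in the ${}_2F_1$.

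Equivalently --- and this is the route I would actually write out, since it sidesteps the non-absolute convergence of the inversion integral --- I would \emph{verify} the claim by transforming the candidate density \eqref{condlaw} (extended by zero outside the ball) and recovering \eqref{eq:fc}; Fourier uniqueness then identifies \eqref{condlaw} as the density, with its compact support obtained for free. Using radial symmetry of the candidate together with the same plane-wave identity, and the scaling $r=ct\,s$, this amounts to evaluating the finite integral
$$\int_0^1(1-s^2)^{\lambda-m/2}s^{m/2}J_{m/2-1}(ct\rho\,s)\,ds,$$
which is precisely Sonine's first finite integral
$$\int_0^1 J_\kappa(as)(1-s^2)^\beta s^{\kappa+1}\,ds=\frac{2^\beta\Gamma(\beta+1)}{a^{\beta+1}}\,J_{\kappa+\beta+1}(a),\qquad \kappa=\tfrac m2-1,\ \beta=\lambda-\tfrac m2,$$
valid here since $\kappa>-1$ and $\beta=\lambda-\tfrac m2>-1$ (the latter inequality also guaranteeing integrability of \eqref{condlaw} near the boundary sphere). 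Sonine's formula returns $J_\lambda(ct\rho)$ with the correct powers of $2$, $ct$ and $\rho$.

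Collecting the Gamma factors, the constant $\frac{\Gamma(\lambda+1)}{\Gamma(\lambda-m/2+1)}\,\pi^{-m/2}(ct)^{-2\lambda}$ appearing in \eqref{condlaw} (after rewriting $\tfrac{n+1}{2}(2\nu+d-1)+\tfrac12=\lambda+1$, $\tfrac{n+1}{2}(2\nu+d-1)-\tfrac{m+1}{2}=\lambda-\tfrac m2$, and $(n+1)(2\nu+d-1)-1=2\lambda$) is exactly what makes the reconstructed transform equal $2^\lambda\Gamma(\lambda+1)(ct\rho)^{-\lambda}J_\lambda(ct\rho)$, completing the identification.

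The main obstacle is the integral involving the product of two Bessel functions. Either one controls the non-absolutely-convergent inversion integral through the discontinuous Weber--Schafheitlin factor --- taking care that it is the regime $r<ct$ that selects the nonzero branch --- or, as above, one recasts the computation as Sonine's finite integral, where convergence is automatic; apart from this, only the bookkeeping of the exponents $\lambda-\tfrac m2$ and $\tfrac m2-1$ and the parameter reductions needs attention.
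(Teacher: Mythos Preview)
Your proposal is correct, and the first route you sketch is exactly what the paper does: direct Fourier inversion in hyperspherical coordinates, the plane-wave identity collapsing the angular part to $(2\pi)^{m/2}J_{m/2-1}(\rho\|\underline{\bf x}_m\|)/(\rho\|\underline{\bf x}_m\|)^{m/2-1}$, and then the Weber--Schafheitlin formula \eqref{formula3} (with, in your notation, index $\lambda$ on one Bessel factor, $\tfrac m2-1$ on the other, $a=ct$, $b=\|\underline{\bf x}_m\|$) to evaluate the remaining radial integral. Your preferred alternative --- compute the forward transform of the candidate \eqref{condlaw} via Sonine's first finite integral and conclude by Fourier uniqueness --- is a legitimate variant that has the advantage of sidestepping the conditional convergence of the inversion integral, a point the paper passes over silently; it is essentially the Hankel-inverse of the paper's step, and your bookkeeping of the exponents $\lambda$, $\lambda-\tfrac m2$, $2\lambda$ and of the constraint $\lambda-\tfrac m2>-1$ is accurate.
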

\begin{proof}
By inverting the characteristic function \eqref{eq:fc}, we are able to show that the 
density law of the process $\underline{\bf X}_m(t),t>0,$ is given by \eqref{condlaw}. Indeed, by passing to the hyperspherical coordinates, we have that
\begin{align*}
p_{n,d}^\nu(\underline{\bf x}_m,t)
&=\frac{1}{(2\pi)^m}\int_{\mathbb{R}^m}e^{-i<\underline{\alpha}_m,\underline{\bf x}_m>}E\left\{e^{i<\underline{\alpha}_m,\underline{\bf X}_m^d(t)>}\right\}d\alpha_1\cdots d\alpha_m\notag\\
&=\frac{1}{(2\pi)^m}\int_0^\infty \rho^{m-1}d\rho\int_0^\pi d\theta_1\cdots\int_0^\pi d\theta_{m-2}\int_0^{2\pi} d	\phi \sin^{m-2}\theta_1\cdots\sin\theta_{m-2}\notag\\
& \exp\{-i\rho(x_m\sin\theta_1\cdots\sin\theta_{m-2}\sin\phi+\cdots+x_2\sin\theta_1\cos\theta_2+x_1\cos\theta_1)\notag\\
&
\notag\frac{2^{\frac{n+1}{2}(2\nu+d-1)-\frac12}\Gamma(\frac{n+1}{2}(2\nu+d-1)+\frac12)}{(ct\rho)^{\frac{n+1}{2}(2\nu+d-1)-\frac12}}J_{\frac{n+1}{2}(2\nu+d-1)-\frac12}(ct\rho),
\notag\\
&=\frac{2^{\frac{n+1}{2}(2\nu+d-1)-\frac12}}{(2\pi)^{m/2}}\Gamma\left(\frac{n+1}{2}(2\nu+d-1)+\frac12\right)\int_0^\infty\rho^{m-1}\frac{J_{\frac m2-1}(\rho ||\underline{\bf x}_m||)}{(\rho||\underline{\bf x}_m||)^{\frac m2-1}}\frac{J_{\frac{n+1}{2}(2\nu+d-1)-\frac12}(ct\rho)}{(ct\rho)^{\frac{n+1}{2}(2\nu+d-1)-\frac12}}d	\rho\notag\\
&=\frac{2^{\frac{n+1}{2}(2\nu+d-1)-\frac12}}{(2\pi)^{m/2}}\frac{\Gamma\left(\frac{n+1}{2}(2\nu+d-1)+\frac12\right)}{(ct)^{\frac{n+1}{2}(2\nu+d-1)-\frac12}||\underline{\bf x}_m||^{\frac m2-1}}\\
&\quad\int_0^\infty \rho^{\frac {m+1}{2}-\frac{n+1}{2}(2\nu+d-1)}J_{\frac m2-1}(\rho ||\underline{\bf x}_m||)J_{\frac{n+1}{2}(2\nu+d-1)-\frac12}(ct\rho)d\rho\notag\\
&=\frac{1}{\pi^{m/2}(ct)^{(n+1)(2\nu+d-1)-1}}\frac{\Gamma(\frac{n+1}{2}(2\nu+d-1)+\frac12)}{\Gamma(\frac{n+1}{2}(2\nu+d-1)-\frac m2+\frac12)}(c^2t^2- ||\underline{\bf x}_m||^2)^{\frac{n+1}{2}(2\nu+d-1)-\frac {m+1}{2}}.
\end{align*}
In the first step above we have performed calculations similar to those leading to \eqref{intangle} and then
\begin{align}\label{intangle2}
&\int_0^\pi d\theta_1\cdots\int_0^\pi d\theta_{m-2}\int_0^{2\pi} d\phi \sin^{m-2}\theta_1\cdots\sin\theta_{m-2}\\
&\exp{\{-i\rho(x_m\sin\theta_1\cdots\sin\theta_{m-2}\sin\phi+\cdots+x_2\sin\theta_1\cos\theta_2+x_1\cos\theta_1)\}}\notag\\
&=(2\pi)^{\frac m2}\frac{J_{\frac m2-1}(\rho ||\underline{\bf x}_m||)}{(\rho ||\underline{\bf x}_m||)^{\frac m2-1}}\notag,
\end{align}
 while in the last step  we have used formula \eqref{formula3}
for $\nu=\frac{n+1}{2}(2\nu+d-1)-\frac32$, $\mu=\frac m2-1$, $a=ct$ and $b= ||\underline{\bf x}_m||$.
\end{proof}
\begin{remark}\label{rem}
By taking into account \eqref{condlaw}, we observe that $\underline{\bf X}_m^d(t),t>0,$ represents an isotropic random walk; that is its probability distribution depends on the distance, from the origin, of the position of the random walker.  Furthermore, by setting $\nu=0$ in \eqref{condlaw}, we reobtain the result (2.26) in De Gregorio and Orsingher (2011).
\end{remark}
\begin{remark}
The cumulative distribution function for the process $\underline{\bf X}_m^d(t),t>0,$ is equal to
\begin{align}\label{eq:cumfun}
P\{\underline{\bf X}_m^d(t)\in\mathcal{H}_r^m\}&=\int_{\mathcal{H}_r^m}p_{n,d}^\nu(\underline{\bf x}_m,t)\prod_{i=1}^mdx_i\notag\\
&=\frac{2\Gamma(\frac{n+1}{2}(2\nu+d-1)+\frac12)}{\Gamma(\frac{n+1}{2}(2\nu+d-1)-\frac m2+\frac12)\Gamma(\frac m2)}\int_0^r\rho^{m-1}\frac{(c^2t^2-\rho^2)^{\frac{n+1}{2}(2\nu+d-1)-\frac {m+1}{2}}
}{(ct)^{(n+1)(2\nu+d-1)-1}}d\rho
\end{align} 
with $0<r<ct$.
If $\frac{n+1}{2}(2\nu+d-1)-\frac {m+1}{2}=q\in \mathbb{N}$, we can write down $P\{\underline{\bf X}_m^d(t)\in\mathcal{H}_r^m\}$ in an alternative form involving a finite sum. Then, one has that
\begin{align*}
P\{\underline{\bf X}_m^d(t)\in\mathcal{H}_r^m\}&=\frac{2\Gamma(q+\frac m2+1)}{\Gamma(q+1)\Gamma(\frac m2)}\int_0^r\rho^{m-1}\frac{(c^2t^2-\rho^2)^{q}
}{(ct)^{2q+m}}d\rho\\
&=\frac{\Gamma(q+\frac m2+1)}{\Gamma(q+1)\Gamma(\frac m2)}\int_0^{(r/ct)^2}y^{\frac m2-1}(1-y)^{q}
dy\\
&=\frac{\Gamma(q+\frac m2+1)}{\Gamma(q+1)\Gamma(\frac m2)}\sum_{k=0}^q(-1)^k\binom{q}{k}\left(\frac{r}{ct}\right)^{2k+ m}\frac{1}{k+\frac m2}
\end{align*} 
\end{remark}
\begin{remark}
By assuming that the number of steps $n+1$ is a random value, it is possible to get the unconditional distribution for $\underline{\bf X}_m^d(t),t>0$. In other words, we suppose that the number $n$ of changes of direction are governed by a fractional Poisson process $N_d^\nu(t),t>0,$ introduced by Orsingher and Beghin (2009). Therefore, we have that 
\begin{equation}\label{eq:fracpoiss}
P\{N_d^\nu(t)=n\}=\frac{1}{\Gamma(\frac{n+1}{2}(2\nu+d-1)+\frac12)}\frac{(\lambda t)^n}{E_{\nu+\frac{d-1}{2},\nu+\frac d2}(\lambda t)},\quad n=0,1,2,...
\end{equation}
where $E_{\alpha,\beta}(x)=\sum_{k=0}\frac{x^k}{k!\Gamma(\alpha k+\beta)}, x\in\mathbb{R},\alpha,\beta>0$ is the Mittag-Leffler function. Furthermore $N_d^\nu(t),t>0,$ is supposed independent from $\underline\tau$ and $\underline\theta$. Then, in order to obtain the unconditional distribution of $\underline{\bf X}_m^d(t),t>0,$ we can average \eqref{condlaw} with the distribution \eqref{eq:fracpoiss}.
\end{remark}

\begin{remark}
Let us consider the radial process $R_m^d(t)=||\underline{\bf X}_m^d(t)||=\sqrt{\sum_{i=1}^mX_i(t)},t>0$. We observe that
\begin{align*}
P\{R_m^d(t)<r\}= P\{\underline{\bf X}_{m}^d(t)\in \mathcal{H}_r^{m}\},
\end{align*}
with $0<r<ct$.
Therefore, let $f_{n,m,d}^{\nu}(r,t)$ be the density function of $R_m^d(t)$, the result \eqref{eq:cumfun}implies that
\begin{align}\label{densradial}
f_{n,m,d}^{\nu}(r,t)=\frac{2\Gamma(\frac{n+1}{2}(2\nu+d-1)+\frac12)}{\Gamma(\frac{n+1}{2}(2\nu+d-1)-\frac m2+\frac12)\Gamma(\frac{m}{2})}\frac{r^{m-1}(c^2t^2-r^2)^{\frac{n+1}{2}(2\nu+d-1)-\frac {m+1}{2}}
}{(ct)^{(n+1)(2\nu+d-1)-1}}.
\end{align}
 Furthermore, by taking into account \eqref{densradial}, we derive the moments of $R_m^d(t),$ that is 
 $$E\{R_m^d(t)\}^p=\frac{\Gamma(\frac{n+1}{2}(2\nu+d-1)+\frac12)\Gamma(\frac{p+m}{2})}{\Gamma(\frac{n+1}{2}(2\nu+d-1)+\frac{p+1}{2})\Gamma(\frac{m}{2})}(ct)^p$$
  with $p\geq 1$.
 \end{remark}

\section{Random flights with $\nu=1$}

As observed in the previous Section for a general value of $\nu$, we are not able to work out the integral \eqref{eq:cfgeneral}. Nevertheless, it is possible to derive a closed-form expression for the characteristic function of $\underline{\bf X}_{d}(t),t>0,$ for some values of $\nu$. The next Theorem provides the explicit characteristic function for the random flight with $\nu=1$. It is worth to mention that this result is remarkable since in this context it is hard to obtain explicit results.

\begin{theorem}\label{teo:cfnu1} 
For $\nu=1$, the process $\underline{\bf X}_{d}(t),t>0,$ admits the following characteristic function
\begin{equation} \label{eq:cfnu1}
\mathcal{F}_n^1(\underline{\alpha}_d)=\frac{\sqrt{\pi}\Gamma((n+1)(d+1))}{2^{\frac{(d+1)(n+1)-1}{2}}}\sum_{j=0}^{n+1}(-1)^{n+1-j}\binom{n+1}{j}\frac{\left(\frac{\alpha_{d}^2}{||\underline{\alpha}_d||^2}\frac{(d+1)}{2}\right)^{n+1-j}}{\Gamma(\frac{(n+1)(d+3)}{2}-j)}\frac{J_{\frac{(n+1)(d+3)-(2j+1)}{2}}(ct||\underline{\alpha}_d||)}{(ct||\underline{\alpha}_d||)^{\frac{(n+1)(d-1)+2j-1}{2}}}
\end{equation}
with $n\geq 1$ and $d\geq 2$.
\end{theorem}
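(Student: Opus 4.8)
The plan is to follow the same two-stage scheme as the proof of Theorem \ref{teo1} — integrate out the directions step by step, then integrate the Dirichlet times — but to exploit a feature of \eqref{eq:densdir} at $\nu=1$ that sidesteps the intractable azimuthal integral $\int_0^{2\pi}e^{iz(\alpha\cos\phi+\beta\sin\phi)}\sin^2\phi\,d\phi$. Comparing exponents in \eqref{eq:densdir}, the $\nu=1$ weight exceeds the uniform ($\nu=0$) weight by a factor $(\sin\theta_1\cdots\sin\theta_{d-2}\sin\phi)^2$, and this product is exactly the square of the last coordinate of the unit direction vector $\hat u$. Writing $d\sigma$ for the normalized uniform law on the sphere, this says
\begin{equation*}
g_{d,1}(\underline\theta)\,d\underline\theta=d\,u_d^2\,d\sigma(\hat u),\qquad u_d=\sin\theta_1\cdots\sin\theta_{d-2}\sin\phi,
\end{equation*}
the constant $d$ being fixed by $\int u_d^2\,d\sigma=1/d$. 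Hence each single-step angular integral is just a second derivative of the uniform directional characteristic function, which is known in closed form for every $d$.

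Concretely, let $\Phi(|w|)=2^{d/2-1}\Gamma(d/2)\,J_{d/2-1}(|w|)/|w|^{d/2-1}$ be the $\nu=0$ directional characteristic function (the one-factor version of \eqref{intangle}). Since $\int u_d^2 e^{i\langle w,\hat u\rangle}\,d\sigma=-\partial^2_{w_d}\Phi(|w|)$, I would set $w=c\tau_k\underline{\alpha}_d$, carry out the chain rule in $w_d$, and apply $\frac{d}{dx}\big(x^{-\nu}J_\nu(x)\big)=-x^{-\nu}J_{\nu+1}(x)$ twice. The cross terms cancel and the $k$-th single-step integral should collapse to
\begin{equation*}
2^{d/2}\Gamma\!\big(\tfrac d2+1\big)\left[\frac{J_{d/2}(r_k)}{r_k^{d/2}}-\frac{\alpha_d^2}{||\underline{\alpha}_d||^2}\,\frac{J_{d/2+1}(r_k)}{r_k^{d/2-1}}\right],\qquad r_k=c\tau_k||\underline{\alpha}_d||.
\end{equation*}
The first term is precisely the $\nu=1$ specialization of \eqref{intangle}; the second is the drift correction carrying the factor $\alpha_d^2/||\underline{\alpha}_d||^2$ of \eqref{eq:cfnu1}.

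Because directions at distinct steps are independent, $\mathcal{I}_n^1$ is the product over $k=1,\dots,n+1$ of these brackets. Expanding binomially, the term with $s$ correction factors carries $\binom{n+1}{s}\big(-\alpha_d^2/||\underline{\alpha}_d||^2\big)^s$ and, once the Dirichlet weight $\prod_k\tau_k^{d}$ from \eqref{eq:timedis} is absorbed, becomes a product of matched Bessel factors $y_k^{\mu_k}J_{\mu_k}(y_k)$ with $\mu_k=d/2$ for the $n+1-s$ leading factors and $\mu_k=d/2+1$ for the $s$ correction factors. The time integration is then the same iterated convolution as in Theorem \ref{teo1}: recursive use of \eqref{formula1} collapses such a product on the simplex to a single Bessel function of order $\sum_k\mu_k+\tfrac n2$ times the constant $\big(\sqrt{2\pi}\big)^{-n}\prod_k\Gamma(\mu_k+\tfrac12)\big/\Gamma\!\big(\sum_k\mu_k+\tfrac{n+1}2\big)$. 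Substituting the $\mu_k$ gives order $\tfrac{(n+1)(d+1)}2+s$ and, crucially, $\prod_k\Gamma(\mu_k+\tfrac12)=\big(\tfrac{d+1}2\big)^s\Gamma\!\big(\tfrac{d+1}2\big)^{n+1}$, producing exactly the $\big(\tfrac{d+1}2\big)^s$ of \eqref{eq:cfnu1}.

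Relabelling $j=n+1-s$ puts the sum in the form of \eqref{eq:cfnu1}, and the overall prefactor is assembled from the Dirichlet normalization $\Gamma((n+1)(d+1))/\Gamma(d+1)^{n+1}$, the angular constants $\{2^{d/2}\Gamma(d/2+1)\}^{n+1}$, and the convolution constants, simplified by the Legendre duplication formula exactly as with the two gamma identities closing the proof of Theorem \ref{teo1}. The only genuinely delicate step is the single-step reduction: one must check that differentiating $\Phi(|w|)$ twice in $w_d$ leaves no residual terms and that the two Bessel orders recombine as displayed. I expect this to be the main obstacle, whereas the binomial expansion and the recursive time integration are routine once the two-term single-step formula is in hand. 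As a cross-check, $s=0$ (equivalently $j=n+1$) reproduces the order $\tfrac{(n+1)(d+1)-1}2$ predicted by Theorem \ref{teo1} at $\nu=1$, and $d=2$ recovers the purely azimuthal case $g_{2,1}=\tfrac1\pi\sin^2\phi$ with $\Phi=J_0$.
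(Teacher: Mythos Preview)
Your argument is correct and lands on exactly the same two-term single-step formula
\[
\mathcal{I}_n^1(\tau_1,\dots,\tau_n,\underline{\alpha}_d)=\{2^{d/2}\Gamma(1+d/2)\}^{n+1}\prod_{k=1}^{n+1}\left[\frac{J_{d/2}(r_k)}{r_k^{d/2}}-\frac{\alpha_d^2}{||\underline{\alpha}_d||^2}\,\frac{J_{d/2+1}(r_k)}{r_k^{d/2-1}}\right]
\]
that the paper derives, and from there the binomial expansion plus the iterated convolution \eqref{formula1} is common to both proofs. The difference is in \emph{how} you reach this intermediate formula and in how you organize the time integration. The paper obtains the bracket by first evaluating the azimuthal integral through the explicit identity \eqref{eq:int} and then peeling off $\theta_{d-2},\dots,\theta_1$ one at a time via \eqref{formula4}; your route instead recognizes $g_{d,1}\,d\underline\theta=d\,u_d^2\,d\sigma$ and recovers the bracket as $-d\,\partial_{w_d}^2\Phi(|w|)$, which is both shorter and conceptually cleaner (and incidentally explains at a glance why the drift enters only through $\alpha_d^2/||\underline{\alpha}_d||^2$). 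For the Dirichlet integration the paper expands into $2^{n+1}$ terms indexed by $n_1<\cdots<n_s$ and checks by explicit calculation of several nested blocks $\mathcal{K}^{n_s},\mathcal{K}^{n_{s-1},n_s},\dots$ that the answer depends only on $j$; your observation that the convolution \eqref{formula1} telescopes to a constant $(\sqrt{2\pi})^{-n}\prod_k\Gamma(\mu_k+\tfrac12)/\Gamma(\sum_k\mu_k+\tfrac{n+1}2)$ depending only on the multiset $\{\mu_k\}$ makes this symmetry immediate and replaces several pages of index bookkeeping. The trade-off is that the paper's computation, while longer, requires no new ingredient beyond \eqref{eq:int}, whereas your differentiation step depends on recognizing the density as a moment of the uniform law.
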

\begin{proof}
In order to prove the result \eqref{eq:cfnu1}, we will use the same approach and tools exploited in the proof of Theorem \ref{teo1}. Nevertheless the proof of \eqref{eq:cfnu1} is not a simple adjustment of the proof of Theorem \ref{teo1} and it requires a particular care as we will show in the next steps.  

We start by taking into account the expression \eqref{eq:cfgeneral} for $\nu=1$. Thus, we have to handle the following integral 
\begin{align}\label{eq:I_1}
&\mathcal{I}_n^1(\tau_1,...,\tau_n,\underline{\alpha}_d)\\
&=\left(\frac{\Gamma(1+d/2)}{\pi^{\frac{d}{2}}}\right)^{n+1}\int_0^\pi d \theta_{1,1}\cdots\int_0^\pi d \theta_{1,n+1}\cdots \int_0^\pi d \theta_{d-2,1}\cdots\int_0^\pi d \theta_{d-2,n+1} \int_0^{2\pi}d \phi_{1}\cdots\int_0^{2\pi} d \phi_{n+1}\notag\\
&\quad \prod_{k=1}^{n+1}\Bigg\{\exp\{ic\tau_k(\alpha_d\sin\theta_{1,k}\sin\theta_{2,k}\cdot\cdot\cdot\sin\theta_{d-2,k}\sin\phi_{k}+\alpha_{d-1}\sin\theta_{1,k}\sin\theta_{2,j}\cdot\cdot\cdot\sin\theta_{d-2,k}\cos\phi_{k}\notag\\
&\quad+\cdots+\alpha_2\sin\theta_{1,k}\cos\theta_{2,k}+\alpha_1 \cos\theta_{1,k} ) \}\sin\theta_{1,k}^{d}\cdot\cdot\cdot\sin^3\theta_{d-2,k}\sin^{2}\phi_k\Bigg\}\notag.
\end{align}
For the $n+1$ integrals with respect to $\phi_k$, we get that
\begin{align*}
&\int_0^{2\pi}d \phi_{1}\cdots\int_0^{2\pi} d \phi_{n+1} \prod_{k=1}^{n+1}e^{ic\tau_k\sin\theta_{1,k}\sin\theta_{2,k}\cdot\cdot\cdot\sin\theta_{d-2,k}(\alpha_d\sin\phi_{k}+\alpha_{d-1}\cos\phi_{k})}\sin\phi_k^{2}d\phi_k\\
&=\prod_{k=1}^{n+1}\left\{\int_0^{2\pi}e^{ic\tau_k\sin\theta_{1,k}\sin\theta_{2,k}\cdot\cdot\cdot\sin\theta_{d-2,k}(\alpha_d\sin\phi_{k}+\alpha_{d-1}\cos\phi_{k})}\sin\phi_k^{2}d\phi_k\right\}\\
&=(2\pi)^{n+1}\prod_{k=1}^{n+1}\Bigg\{\frac{J_1(c\tau_k\sin\theta_{1,k}\sin\theta_{2,k}\cdot\cdot\cdot\sin\theta_{d-2,k}\sqrt{\alpha_d^2+\alpha_{d-1}^2})}{c\tau_k\sin\theta_{1,k}\sin\theta_{2,k}\cdot\cdot\cdot\sin\theta_{d-2,k}\sqrt{\alpha_d^2+\alpha_{d-1}^2}}\\
&\quad-\frac{\alpha_{d}^2}{\alpha_{d}^2+\alpha_{d-1}^2}J_2(c\tau_k\sin\theta_{1,k}\sin\theta_{2,k}\cdot\cdot\cdot\sin\theta_{d-2,k}\sqrt{\alpha_d^2+\alpha_{d-1}^2})\Bigg\},
\end{align*}
where in the last step we have used the result \eqref{eq:int}. Therefore the integral \eqref{eq:I_1} becomes
\begin{align*}
&\mathcal{I}_n^1(\tau_1,...,\tau_n,\underline{\alpha}_d)\\
&=\left(\frac{\Gamma(1+d/2)}{\pi^{\frac{d}{2}}}\right)^{n+1}(2\pi)^{n+1}\int_0^\pi d \theta_{1,1}\cdots\int_0^\pi d \theta_{1,n+1}\cdots \int_0^\pi d \theta_{d-2,1}\cdots\int_0^\pi d \theta_{d-2,n+1}\notag\\
&\quad \prod_{k=1}^{n+1}\Bigg\{e^{ic\tau_k(\alpha_{d-2}\sin\theta_{1,k}\sin\theta_{2,k}\cdot\cdot\cdot\cos\theta_{d-2,k}+\cdots+\alpha_2\sin\theta_{1,k}\cos\theta_{2,k}+\alpha_1 \cos\theta_{1,k} )}\sin\theta_{1,k}^{d}\cdot\cdot\cdot\sin^3\theta_{d-2,k}\\
&\quad\Bigg[\frac{J_1(c\tau_k\sin\theta_{1,k}\sin\theta_{2,k}\cdot\cdot\cdot\sin\theta_{d-2,k}\sqrt{\alpha_d^2+\alpha_{d-1}^2})}{c\tau_k\sin\theta_{1,k}\sin\theta_{2,k}\cdot\cdot\cdot\sin\theta_{d-2,k}\sqrt{\alpha_d^2+\alpha_{d-1}^2}}\\
&\quad-\frac{\alpha_{d}^2}{\alpha_{d}^2+\alpha_{d-1}^2}J_2(c\tau_k\sin\theta_{1,k}\sin\theta_{2,k}\cdot\cdot\cdot\sin\theta_{d-2,k}\sqrt{\alpha_d^2+\alpha_{d-1}^2})\Bigg]\Bigg\}.
\end{align*}

The integrals with respect to $\theta_{d-2,k}$ are equal to
\begin{align*}
&\int_0^\pi d \theta_{d-2,1}\cdots\int_0^\pi d \theta_{d-2,n+1}\prod_{k=1}^{n+1}\Bigg\{e^{ic\tau_k(\alpha_{d-2}\sin\theta_{1,k}\sin\theta_{2,k}\cdot\cdot\cdot\cos\theta_{d-2,k})}\sin^3\theta_{d-2,k}\\
&\Bigg[\frac{J_1(c\tau_k\sin\theta_{1,k}\sin\theta_{2,k}\cdot\cdot\cdot\sin\theta_{d-2,k}\sqrt{\alpha_d^2+\alpha_{d-1}^2})}{c\tau_k\sin\theta_{1,k}\sin\theta_{2,k}\cdot\cdot\cdot\sin\theta_{d-2,k}\sqrt{\alpha_d^2+\alpha_{d-1}^2}}\\
&-\frac{\alpha_{d}^2}{\alpha_{d}^2+\alpha_{d-1}^2}J_2(c\tau_k\sin\theta_{1,k}\sin\theta_{2,k}\cdot\cdot\cdot\sin\theta_{d-2,k}\sqrt{\alpha_d^2+\alpha_{d-1}^2})\Bigg]\Bigg\}\\
&=\prod_{k=1}^{n+1}\Bigg\{2\int_0^{\pi/2} d\theta_{d-2,k}\cos(c\tau_k\alpha_{d-2}\sin\theta_{1,k}\sin\theta_{2,k}\cdot\cdot\cdot\cos\theta_{d-2,k})\sin^3\theta_{d-2,k}\\
&\quad\Bigg[\frac{J_1(c\tau_k\sin\theta_{1,k}\sin\theta_{2,k}\cdot\cdot\cdot\sin\theta_{d-2,k}\sqrt{\alpha_d^2+\alpha_{d-1}^2})}{c\tau_k\sin\theta_{1,k}\sin\theta_{2,k}\cdot\cdot\cdot\sin\theta_{d-2,k}\sqrt{\alpha_d^2+\alpha_{d-1}^2}}\\
&\quad-\frac{\alpha_{d}^2}{\alpha_{d}^2+\alpha_{d-1}^2}J_2(c\tau_k\sin\theta_{1,k}\sin\theta_{2,k}\cdot\cdot\cdot\sin\theta_{d-2,k}\sqrt{\alpha_d^2+\alpha_{d-1}^2})\Bigg]\Bigg\}\\
&=\prod_{k=1}^{n+1}\Bigg\{2\sqrt{\frac \pi2}\Bigg[\frac{J_{3/2}(c\tau_k\sin\theta_{1,k}\sin\theta_{2,k}\cdot\cdot\cdot\sin\theta_{d-3,k}\sqrt{\alpha_d^2+\alpha_{d-1}^2+\alpha_{d-2}^2})}{(c\tau_k\sin\theta_{1,k}\sin\theta_{2,k}\cdot\cdot\cdot\sin\theta_{d-3,k}\sqrt{\alpha_d^2+\alpha_{d-1}^2+\alpha_{d-2}^2})^{3/2}}\\
&\quad-\frac{\alpha_{d}^2J_{5/2}(c\tau_k\sin\theta_{1,k}\sin\theta_{2,k}\cdot\cdot\cdot\sin\theta_{d-3,k}\sqrt{\alpha_d^2+\alpha_{d-1}^2+\alpha_{d-2}^2})}{(c\tau_k\sin\theta_{1,k}\sin\theta_{2,k}\cdot\cdot\cdot\sin\theta_{d-3,k})^{1/2}(\sqrt{\alpha_d^2+\alpha_{d-1}^2+\alpha_{d-2}^2})^{5/2}}\Bigg]\Bigg\},
\end{align*}
where in the last step we have used formula \eqref{formula4} for 
$$a=c\tau_k\sin\theta_{1,k}\sin\theta_{2,k}\cdot\cdot\cdot\sin\theta_{d-3,k}\sqrt{\alpha_d^2+\alpha_{d-1}^2}$$
$$b=c\tau_k\alpha_{d-2}\sin\theta_{1,k}\sin\theta_{2,k}\cdot\cdot\cdot\sin\theta_{d-3,k}$$
and $\nu=1,2$ and also considered that
$$
\int_0^{\pi}(\sin x)^{\nu+1}\sin(b\cos x)J_\nu(a\sin
x)dx=0$$

By means of the same arguments, we can calculate the further integrals with respect to $\theta_{1,k},...,\theta_{d-4,k},,\theta_{d-3,k}$ and then we obtain that
\begin{equation}
\mathcal{I}_n^1(\tau_1,...,\tau_n,\underline{\alpha}_d)=\left\{2^{d/2}\Gamma(1+d/2)\right\}^{n+1}\prod_{k=1}^{n+1}\left\{\frac{J_{d/2}(c\tau_k||\underline{\alpha}_d||)}{(c\tau_k||\underline{\alpha}_d||)^{d/2}}-\frac{\alpha_{d}^2J_{d/2+1}(c\tau_k||\underline{\alpha}_d||)}{(c\tau_k)^{d/2-1}||\underline{\alpha}_d||^{d/2+1}}\right\}
\end{equation}
 
 The characteristic function for $\nu=1$ becomes
 \begin{align}\label{eq:cfintegralsum}
 \mathcal{F}_n^1(\underline{\alpha}_d)
 &=\left\{2^{d/2}\Gamma(1+d/2)\right\}^{n+1}\int_0^{t}d\tau_1\int_0^{t-\tau_1}d\tau_2\cdots\int_0^{t-\sum_{k=1}^{n-1}\tau_k}d\tau_n\,f_{d,1}(\tau_1,...,\tau_n)\notag\\
 &\quad\prod_{k=1}^{n+1}\left\{\frac{J_{d/2}(c\tau_k||\underline{\alpha}_d||)}{(c\tau_k||\underline{\alpha}_d||)^{d/2}}-\frac{\alpha_{d}^2J_{d/2+1}(c\tau_k||\underline{\alpha}_d||)}{(c\tau_k)^{d/2-1}||\underline{\alpha}_d||^{d/2+1}}\right\}
\end{align}

By developing the product appearing in \eqref{eq:cfintegralsum}, the characteristic function $ \mathcal{F}_n^1(\underline{\alpha}_d)$ reduces to a sum, over to $2^{n+1}$ elements, with generic terms given by 
  \begin{align}\label{eq:mixintgen}
  &\mathcal{J}_{n,j}^{n_1,n_2,...,n_s}(\underline{\alpha}_d)\\
 &=\left\{\frac{2^{d/2}\Gamma(1+d/2)}{\Gamma(d+1)}\right\}^{n+1}\frac{\Gamma((n+1)(d+1))}{t^{(n+1)(d+1)-1}}\left(-\frac{\alpha_{d}^2}{||\underline{\alpha}_d||^2}\right)^{n+1-j}\notag\\
 &\quad\int_0^{t}\tau_1^{d}d\tau_1\int_0^{t-\tau_1}\tau_2^dd\tau_2\cdots\int_0^{t-\sum_{k=1}^{n_1-2}}\tau_{n_1-1}^dd\tau_{n_1-1}\int_0^{t-\sum_{k=1}^{n_1-1}\tau_k}\tau_{n_1}^dd\tau_{n_1}\prod_{k=1}^{n_1}\frac{J_{d/2}(c\tau_k||\underline{\alpha}_d||)}{(c\tau_k||\underline{\alpha}_d||)^{d/2}}\notag\\
 &\quad\int_0^{t-\sum_{k=1}^{n_1}\tau_{n_1+1}}\tau_{n_1+1}^{d}d\tau_{n_1+1}\int_0^{t-\sum_{k=1}^{n_1+1}\tau_{n}}\tau_{n_1+2}^dd\tau_{n_1+2}\cdots\notag\\
 &\quad\int_0^{t-\sum_{k=1}^{n_2-2}\tau_k}\tau_{n_2-1}^dd\tau_{n_2-1}\int_0^{t-\sum_{k=1}^{n_2-1}\tau_k}\tau_{n_2}^dd\tau_{n_2}\prod_{k=n_1+1}^{n_2}\frac{J_{d/2+1}(c\tau_k||\underline{\alpha}_d||)}{(c\tau_k||\underline{\alpha}_d||)^{d/2-1}}\notag\\
 &\quad\cdots\notag\\
  &\quad\int_0^{t-\sum_{k=1}^{n_{s-1}}\tau_k}\tau_{n_{s-1}+1}^{d}d\tau_{n_{s-1}+1}\int_0^{t-\sum_{k=1}^{n_{s-1}+1}\tau_k}\tau_{n_{s-1}+2}^dd\tau_{n_{s-1}+2}\cdots\notag\\
  &\quad\int_0^{t-\sum_{k=1}^{n_s-2}\tau_k}\tau_{n_{s}-1}^dd\tau_{n_{s}-1}\int_0^{t-\sum_{k=1}^{n_s-1}\tau_k}\tau_{n_{s}}^dd\tau_{n_{s}}\prod_{k=n_{s-1}+1}^{n_{s}}\frac{J_{d/2}(c\tau_k||\underline{\alpha}_d||)}{(c\tau_k||\underline{\alpha}_d||)^{d/2}}\notag\\
 &\quad\int_0^{t-\sum_{k=1}^{n_{s}}\tau_k}\tau_{n_s+1}^{d}d\tau_{n_s+1}\int_0^{t-\sum_{k=1}^{n_{s}+1}\tau_k}\tau_{{n_{s}+2}}^dd\tau_{{n_{s}+2}}\cdots\notag\\
 &\quad\int_0^{t-\sum_{k=1}^{n-2}\tau_k}\tau_{n-1}^dd\tau_{n-1}\int_0^{t-\sum_{k=1}^{n-1}\tau_k}\tau_{n}^d(t-\sum_{k=1}^{n}\tau_k)^dd\tau_{n}\prod_{k=n_{s}+1}^{n+1}\frac{J_{d/2+1}(c\tau_k||\underline{\alpha}_d||)}{(c\tau_k||\underline{\alpha}_d||)^{d/2-1}}\notag,
   \end{align}
    with $0\leq n_1<n_2<...<n_{s-1}<n_s	\leq n+1$, $j=n_1+n_3-n_2+...+n_{s}-n_{s-1}$ and $0\leq j\leq n+1$. Therefore, we focus our attention on the calculation of \eqref{eq:mixintgen}. 
   
We deal with the following $(n-n_s)$-fold integral
 \begin{align*}
\mathcal{K}^{n_s}(\underline{\alpha}_d)=&\int_0^{t-\sum_{k=1}^{n_{s}}\tau_k}\tau_{n_s+1}^{d}d\tau_{n_s+1}\int_0^{t-\sum_{k=1}^{n_{s}+1}\tau_k}\tau_{{n_{s}+2}}^dd\tau_{{n_{s}+2}}\cdots\notag\\
 &\int_0^{t-\sum_{k=1}^{n-2}\tau_k}\tau_{n-1}^dd\tau_{n-1}\int_0^{t-\sum_{k=1}^{n-1}\tau_k}\tau_{n}^d(t-\sum_{k=1}^{n}\tau_k)^dd\tau_{n}\prod_{k=n_{s}+1}^{n+1}\frac{J_{d/2+1}(c\tau_k||\underline{\alpha}_d||)}{(c\tau_k||\underline{\alpha}_d||)^{d/2-1}}
\end{align*}
   by applying formula \eqref{formula1}. Indeed, the integral with respect to $\tau_n$ is given by
\begin{align*}
&\int_0^{t-\sum_{k=1}^{n-1}\tau_k}\tau_n^d(t-\sum_{k=1}^n\tau_k)^d\frac{J_{d/2+1}(c\tau_n||\underline{\alpha}_d||)}{(c\tau_n||\underline{\alpha}_d||)^{d/2-1}}\frac{J_{d/2+1}(c(t-\sum_{k=1}^{n}\tau_k)||\underline{\alpha}_d||)}{(c(t-\sum_{k=1}^{n}\tau_k)||\underline{\alpha}_d||)^{d/2-1}}d\tau_n\\
&=\int_0^{t-\sum_{k=1}^{n-1}\tau_k}\frac{(c\tau_n||\underline{\alpha}_d||)^{d/2+1}(c(t-\sum_{k=1}^{n}\tau_k)||\underline{\alpha}_d||)^{d/2+1}}{(c||\underline{\alpha}_d||)^{2d}}J_{d/2+1}(c\tau_n||\underline{\alpha}_d||)J_{d/2+1}(c(t-\sum_{k=1}^{n}\tau_k)||\underline{\alpha}_d||)d\tau_n\\
&=(c\tau_n||\underline{\alpha}_d||=w)\\
&=\int_0^{c(t-\sum_{k=1}^{n-1}\tau_k)||\underline{\alpha}_d||}\frac{w^{d/2+1}(c(t-\sum_{k=1}^{n-1}\tau_k)||\underline{\alpha}_d||-w)^{d/2+1}}{(c||\underline{\alpha}_d||)^{2d+1}}J_{d/2+1}(w)J_{d/2+1}(c(t-\sum_{k=1}^{n-1}\tau_k)||\underline{\alpha}_d||-w)dw\\
&=\frac{1}{(c||\underline{\alpha}_d||)^{2d+1}}\frac{\Gamma^2((d+3)/2)}{\sqrt{2\pi}\Gamma(d+3)}(c(t-\sum_{k=1}^{n-1}\tau_k)||\underline{\alpha}_d||)^{d+5/2}J_{d+5/2}(c(t-\sum_{k=1}^{n-1}\tau_k)||\underline{\alpha}_d||)
\end{align*} 
where in the last step we have used formula \eqref{formula1}. The integral with respect to $\tau_{n-1}$ becomes
\begin{align*}
&\frac{1}{(c||\underline{\alpha}_d||)^{3d+1}}\frac{\Gamma^2((d+3)/2)}{\sqrt{2\pi}\Gamma(d+3)}\\
&\quad\int_0^{t-\sum_{k=1}^{n-2}\tau_k}(c\tau_{n-1}||\underline{\alpha}_d||)^{d/2+1}(c(t-\sum_{k=1}^{n-1}\tau_k)||\underline{\alpha}_d||)^{d+5/2}J_{d/2+1}(c\tau_{n-1}||\underline{\alpha}_d||)J_{d+5/2}(c(t-\sum_{k=1}^{n-1}\tau_k)||\underline{\alpha}_d||)d\tau_{n-1}\\
&=(c\tau_{n-1}||\underline{\alpha}_d||=w)\\
&=\frac{1}{(c||\underline{\alpha}_d||)^{3d+2}}\frac{\Gamma^2((d+3)/2)}{\sqrt{2\pi}\Gamma(d+3)}\\
&\quad\int_0^{c(t-\sum_{k=1}^{n-2}\tau_k)\underline{\alpha}_d||}w^{d/2+1}(c(t-\sum_{k=1}^{n-2}\tau_k)||\underline{\alpha}_d||-w)^{d+5/2}J_{d/2+1}(w)J_{d+5/2}(c(t-\sum_{k=1}^{n-2}\tau_k)||\underline{\alpha}_d||-w)dw\\
&=\frac{1}{(c||\underline{\alpha}_d||)^{3d+2}}\frac{\Gamma^3((d+3)/2)}{(\sqrt{2\pi})^2\Gamma(\frac32(d+3))}(c(t-\sum_{k=1}^{n-2}\tau_k)||\underline{\alpha}_d||)^{\frac32d+4}J_{\frac32d+4}(c(t-\sum_{k=1}^{n-2}\tau_k)||\underline{\alpha}_d||),
\end{align*} 
where in the last step we have used the formula \eqref{formula1} again. We can continue at the same way and then we have that
\begin{align*}
\mathcal{K}^{n_s}(\underline{\alpha}_d)=&\frac{1}{(c||\underline{\alpha}_d||)^{(d+1)(n-n_s+1)-2}}\frac{\Gamma^{n-n_s}((d+3)/2)}{(\sqrt{2\pi})^{n-n_s-1}\Gamma(\frac{n-n_s}{2}(d+3))}\\
&\int_0^{t-\sum_{k=1}^{n_s}\tau_k}d\tau_{n_s+1}(c\tau_{n_s+1}||\underline{\alpha}_d||)^{d/2+1}(c(t-\sum_{k=1}^{n_s+1}\tau_k)||\underline{\alpha}_d||)^{\frac{(n-n_s)(d+3)}{2}-\frac12}\\
&\quad J_{d/2+1}(c\tau_{n_s+1}||\underline{\alpha}_d||)J_{\frac{(n-n_s)(d+3)}{2}-\frac12}(c(t-\sum_{k=1}^{n_s+1}\tau_k)||\underline{\alpha}_d||)\\
&=(c\tau_{n_s+1}||\underline{\alpha}_d||=w)\\
&=\frac{1}{(c||\underline{\alpha}_d||)^{(d+1)(n-n_s+1)-1}}\frac{\Gamma^{n-n_s}((d+3)/2)}{(\sqrt{2\pi})^{n-n_s-1}\Gamma(\frac{n-n_s}{2}(d+3))}\\
&\int_0^{c(t-\sum_{k=1}^{n_s}\tau_k)||\underline{\alpha}_d||}dww^{d/2+1}(c(t-\sum_{k=1}^{n_s}\tau_k)||\underline{\alpha}_d||-w)^{\frac{(n-n_s)(d+3)}{2}-\frac12}\\
&\quad J_{d/2+1}(w)J_{\frac{(n-n_s)(d+3)}{2}-\frac12}(c(t-\sum_{k=1}^{n_s}\tau_k)||\underline{\alpha}_d||-w)\\
&=\frac{\Gamma^{n+1-n_s}((d+3)/2)(c(t-\sum_{k=1}^{n_s}\tau_k)||\underline{\alpha}_d||)^{\frac{(n+1-n_s)(d+3)}{2}-\frac12}}{(c||\underline{\alpha}_d||)^{(d+1)(n-n_s+1)-1}(\sqrt{2\pi})^{n-n_s}\Gamma(\frac{n+1-n_s}{2}(d+3))}J_{\frac{(n+1-n_s)(d+3)}{2}-\frac12}(c(t-\sum_{k=1}^{n_s}\tau_k)||\underline{\alpha}_d||).
\end{align*}

Now, we perform the calculations concerning the $(n_s-n_{s-1})$-fold integral 
\begin{align*}
&\mathcal{K}^{n_{s-1},n_s}(\underline{\alpha}_d)\\
&=\int_0^{t-\sum_{k=1}^{n_{s-1}}\tau_k}\tau_{n_{s-1}+1}^{d}d\tau_{n_{s-1}+1}\int_0^{t-\sum_{k=1}^{n_{s-1}+1}\tau_k}\tau_{n_{s-1}+2}^dd\tau_{n_{s-1}+2}\cdots\notag\\
  &\quad\int_0^{t-\sum_{k=1}^{n_s-2}\tau_k}\tau_{n_{s}-1}^dd\tau_{n_{s}-1}\int_0^{t-\sum_{k=1}^{n_s-1}\tau_k}\tau_{n_{s}}^dd\tau_{n_{s}}\mathcal{K}^{n_s}(\underline{\alpha}_d)\prod_{k=n_{s-1}+1}^{n_{s}}\frac{J_{d/2}(c\tau_k||\underline{\alpha}_d||)}{(c\tau_k||\underline{\alpha}_d||)^{d/2}}.
\end{align*}

By means of the same approach adopted above, we can write down that
\begin{align*}
&\frac{\Gamma^{n+1-n_s}((d+3)/2)
}{(c||\underline{\alpha}_d||)^{(d+1)(n-n_s+1)-1+d}(\sqrt{2\pi})^{n-n_s}\Gamma(\frac{n+1-n_s}{2}(d+3))}\\
&\int_0^{t-\sum_{k=1}^{n_s-1}\tau_k}d\tau_{n_s}(c\tau_{n_s}||\underline{\alpha}_d||)^{d/2}(c(t-\sum_{k=1}^{n_s}\tau_k)||\underline{\alpha}_d||)^{\frac{(n+1-n_s)(d+3)}{2}-\frac12}\\
&\quad J_{d/2}(c\tau_{n_s}||\underline{\alpha}_d||)J_{\frac{(n+1-n_s)(d+3)}{2}-\frac12}(c(t-\sum_{k=1}^{n_s}\tau_k)||\underline{\alpha}_d||)\\
&=(c\tau_{n_s}||\underline{\alpha}_d||=w)\\
&=\frac{\Gamma^{n+1-n_s}((d+3)/2)
}{(c||\underline{\alpha}_d||)^{(d+1)(n-n_s+1)+d}(\sqrt{2\pi})^{n-n_s}\Gamma(\frac{n+1-n_s}{2}(d+3))}\\
&\quad\int_0^{c(t-\sum_{k=1}^{n_s-1}\tau_k)||\underline{\alpha}_d||}dww^{d/2}(c(t-\sum_{k=1}^{n_s-1}\tau_k)||\underline{\alpha}_d||-w)^{\frac{(n+1-n_s)(d+3)}{2}-\frac12}\\
&\qquad J_{d/2}(w)J_{\frac{(n+1-n_s)(d+3)}{2}-\frac12}(c(t-\sum_{k=1}^{n_s-1}\tau_k)||\underline{\alpha}_d||-w)\\
&=\frac{\Gamma^{n+1-n_s}((d+3)/2)\Gamma((d+1)/2)}{(c||\underline{\alpha}_d||)^{(d+1)(n-n_s+1)+d}(\sqrt{2\pi})^{n+1-n_s}\Gamma(\frac{(n+1-(n_s-1))(d+3)}{2}-1)}\\
&\quad(c(t-\sum_{k=1}^{n_s-1}\tau_k)||\underline{\alpha}_d||)^{\frac{(n+1-(n_s-1))(d+3)}{2}-\frac32}J_{\frac{(n+1-(n_s-1))(d+3)}{2}-\frac32}(c(t-\sum_{k=1}^{n_s-1}\tau_k)||\underline{\alpha}_d||).
\end{align*} 

 Then, carrying on the same calculations for the integrals with respect to the variables $\tau_{n_s-1},...,\tau_{n_{s-1}+2}$, we get that 
\begin{align*}
&\mathcal{K}^{n_{s-1},n_s}(\underline{\alpha}_d)\\
&=\frac{\Gamma^{n+1-n_s}((d+3)/2)\Gamma^{n_s-n_{s-1}-1}((d+1)/2)}{(c||\underline{\alpha}_d||)^{(d+1)(n-n_{s-1})-1+d}(\sqrt{2\pi})^{n-n_{s-1}-1}\Gamma(\frac{(n-n_{s-1})}{2}(d+3)-(n_s-n_{s-1}-1))}\\
&\int_0^{t-\sum_{k=1}^{n_{s-1}}\tau_k}d\tau_{n_{s-1}+1}(c\tau_{n_{s-1}+1}||\underline{\alpha}_d||)^{d/2}(c(t-\sum_{k=1}^{n_{s-1}+1}\tau_k)||\underline{\alpha}_d||)^{\frac{(n-n_{s-1})}{2}(d+3)-(n_s-n_{s-1}-\frac12)}\\
&\quad J_{d/2}(c\tau_{n_{s-1}+1}||\underline{\alpha}_d||)J_{\frac{(n-n_{s-1})}{2}(d+3)-(n_s-n_{s-1}-\frac12)}(c(t-\sum_{k=1}^{n_{s-1}+1}\tau_k)||\underline{\alpha}_d||)\\
&=(c\tau_{n_{s-1}+1}||\underline{\alpha}_d||=w)\\
&=\frac{\Gamma^{n+1-n_s}((d+3)/2)\Gamma^{n_s-n_{s-1}-1}((d+1)/2)}{(c||\underline{\alpha}_d||)^{(d+1)(n-n_{s-1}+1)-1}(\sqrt{2\pi})^{n-n_{s-1}-1}\Gamma(\frac{(n-n_{s-1})}{2}(d+3)-(n_s-n_{s-1}-1))}\\
&\quad\int_0^{c(t-\sum_{k=1}^{n_{s-1}}\tau_k)||\underline{\alpha}_d||}dww^{d/2}(c(t-\sum_{k=1}^{n_{s-1}}\tau_k)||\underline{\alpha}_d||-w)^{\frac{(n-n_{s-1})}{2}(d+3)-(n_s-n_{s-1}-\frac12)}\\
&\qquad J_{d/2}(w)J_{\frac{(n-n_{s-1})}{2}(d+3)-(n_s-n_{s-1}-\frac12)}(c(t-\sum_{k=1}^{n_{s-1}}\tau_k)||\underline{\alpha}_d||-w)\\
&=\frac{((d+1)/2)^{n+1-n_{s}}\Gamma^{n+1-n_{s-1}}((d+1)/2)}{(c||\underline{\alpha}_d||)^{(n+1-n_{s-1})(d+1)-1}(\sqrt{2\pi})^{n-n_{s-1}}\Gamma(\frac{(n+1-n_{s-1})(d+3)}{2}-(n_s-n_{s-1}))}\\
&\quad(c(t-\sum_{k=1}^{n_{s-1}}\tau_k)||\underline{\alpha}_d||)^{\frac{(n+1-n_{s-1})(d+3)}{2}-(n_s-n_{s-1}+1/2)}J_{\frac{(n+1-n_{s-1})(d+3)}{2}-(n_s-n_{s-1}+1/2)}(c(t-\sum_{k=1}^{n_{s-1}}\tau_k)||\underline{\alpha}_d||)
\end{align*} 
where in the last step we have also exploited the formula $\Gamma(1+z)=z\Gamma(z)$.

Now, bearing in mind the same steps used to work out $\mathcal{K}^{n_s}(\underline{\alpha}_d)$, we are able to explicit, omitting the calculations, the following integral 

\begin{align*}
&\mathcal{K}^{n_{s-2},n_{s-1},n_s}(\underline{\alpha}_d)\\
 &=\int_0^{t-\sum_{k=1}^{n_{s-2}}\tau_k}\tau_{n_{s-2}+1}^{d}d\tau_{n_{s-2}+1}\int_0^{t-\sum_{k=1}^{n_{s-2}+1}\tau_k}\tau_{{n_{s-2}+2}}^dd\tau_{{n_{s-2}+2}}\cdots\\
 &\quad\int_0^{t-\sum_{k=1}^{n_{s-1}-2}\tau_k}\tau_{n_{s-1}-1}^dd\tau_{n_{s-1}-1}\int_0^{t-\sum_{k=1}^{n_{s-1}-1}\tau_k}\tau_{n_{s-1}}^dd\tau_{n_{s-1}}\mathcal{K}^{n_{s-1},n_s}(\underline{\alpha}_d)\prod_{k=n_{s-2}+1}^{n_{s-1}}\frac{J_{d/2+1}(c\tau_k||\underline{\alpha}_d||)}{(c\tau_k||\underline{\alpha}_d||)^{d/2-1}}\\
 &=\frac{((d+1)/2)^{n+1-n_{s}+n_{s-1}-n_{s-2}}\Gamma^{n+1-n_{s-2}}((d+1)/2)}{(c||\underline{\alpha}_d||)^{(n+1-n_{s-2})(d+1)-1}(\sqrt{2\pi})^{n-n_{s-2}}\Gamma(\frac{(n+1-n_{s-2})(d+3)}{2}-(n_s-n_{s-1}))}\\
&\quad(c(t-\sum_{k=1}^{n_{s-2}}\tau_k)||\underline{\alpha}_d||)^{\frac{(n+1-n_{s-2})(d+3)}{2}-(n_s-n_{s-1}+1/2)}J_{\frac{(n+1-n_{s-2})(d+3)}{2}-(n_s-n_{s-1}+1/2)}(c(t-\sum_{k=1}^{n_{s-2}}\tau_k)||\underline{\alpha}_d||).
\end{align*}

Analogously to $\mathcal{K}^{n_{s-1},n_s}(\underline{\alpha}_d)$, we get that
\begin{align*}
&\mathcal{K}^{n_{s-3},n_{s-2},n_{s-1},n_s}(\underline{\alpha}_d)\\
 &=\int_0^{t-\sum_{k=1}^{n_{s-3}}\tau_k}\tau_{n_{s-3}+1}^{d}d\tau_{n_{s-3}+1}\int_0^{t-\sum_{k=1}^{n_{s-3}+1}\tau_k}\tau_{{n_{s-3}+2}}^dd\tau_{{n_{s-3}+2}}\cdots\\
 &\quad\int_0^{t-\sum_{k=1}^{n_{s-2}-2}\tau_k}\tau_{n_{s-2}-1}^dd\tau_{n_{s-2}-1}\int_0^{t-\sum_{k=1}^{n_{s-2}-1}\tau_k}\tau_{n_{s-2}}^dd\tau_{n_{s-2}}\mathcal{K}^{n_{s-2},n_{s-1},n_s}(\underline{\alpha}_d)\prod_{k=n_{s-3}+1}^{n_{s-2}}\frac{J_{d/2}(c\tau_k||\underline{\alpha}_d||)}{(c\tau_k||\underline{\alpha}_d||)^{d/2}}\\
 &=\frac{((d+1)/2)^{n+1-n_{s}+n_{s-1}-n_{s-2}}\Gamma^{n+1-n_{s-3}}((d+1)/2)}{(c||\underline{\alpha}_d||)^{(n+1-n_{s-3})(d+1)-1}(\sqrt{2\pi})^{n-n_{s-3}}\Gamma(\frac{(n+1-n_{s-3})(d+3)}{2}-[(n_s-n_{s-1})+(n_{s-2}-n_{s-3})])}\\
&\quad(c(t-\sum_{k=1}^{n_{s-3}}\tau_k)||\underline{\alpha}_d||)^{\frac{(n+1-n_{s-3})(d+3)}{2}-[(n_s-n_{s-1})+(n_{s-2}-n_{s-3})+1/2]}\\
&\quad\times J_{\frac{(n+1-n_{s-3})(d+3)}{2}-[(n_s-n_{s-1})+(n_{s-2}-n_{s-3})+1/2]}(c(t-\sum_{k=1}^{n_{s-3}}\tau_k)||\underline{\alpha}_d||).
\end{align*}

Finally, by applying recursively the same arguments exploited so far, we get that

\begin{align*}
  &\mathcal{J}_{n,j}^{n_1,n_2,...,n_s}(\underline{\alpha}_d)\\
 &=\left\{\frac{2^{d/2}\Gamma(1+d/2)}{\Gamma(d+1)}\right\}^{n+1}\frac{\Gamma((n+1)(d+1))}{t^{(n+1)(d+1)-1}}\left(-\frac{\alpha_{d}^2}{||\underline{\alpha}_d||^2}\right)^{n+1-j}\notag\\
 &\quad\int_0^{t}\tau_1^{d}d\tau_1\int_0^{t-\tau_1}\tau_2^dd\tau_2\cdots\int_0^{t-\sum_{k=1}^{n_1-2}}\tau_{n_1-1}^dd\tau_{n_1-1}\int_0^{t-\sum_{k=1}^{n_1-1}\tau_k}\tau_{n_1}^dd\tau_{n_1}\prod_{k=1}^{n_1}\frac{J_{d/2}(c\tau_k||\underline{\alpha}_d||)}{(c\tau_k||\underline{\alpha}_d||)^{d/2}}\notag\\
 &\quad\int_0^{t-\sum_{k=1}^{n_1}\tau_{n_1+1}}\tau_{n_1+1}^{d}d\tau_{n_1+1}\int_0^{t-\sum_{k=1}^{n_1+1}\tau_{n}}\tau_{n_1+2}^dd\tau_{n_1+2}\cdots\notag\\
 &\quad\int_0^{t-\sum_{k=1}^{n_2-2}\tau_k}\tau_{n_2-1}^dd\tau_{n_2-1}\int_0^{t-\sum_{k=1}^{n_2-1}\tau_k}\tau_{n_2}^dd\tau_{n_2}\prod_{k=n_1+1}^{n_2}\frac{J_{d/2+1}(c\tau_k||\underline{\alpha}_d||)}{(c\tau_k||\underline{\alpha}_d||)^{d/2-1}}\\
 &\quad\frac{((d+1)/2)^{n+1-[(n_s-n_{s-1})+(n_{s-2}-n_{s-3})+...+(n_{3}-n_{2})]}\Gamma^{n+1-n_{2}}((d+1)/2)}{(c||\underline{\alpha}_d||)^{(n+1-n_{2})(d+1)-1}(\sqrt{2\pi})^{n-n_{2}}\Gamma(\frac{(n+1-n_{2})(d+3)}{2}-[(n_s-n_{s-1})+(n_{s-2}-n_{s-3})+...+(n_{3}-n_{2})])}\\
&\quad(c(t-\sum_{k=1}^{n_{2}}\tau_k)||\underline{\alpha}_d||)^{\frac{(n+1-n_{2})(d+3)}{2}-[(n_s-n_{s-1})+(n_{s-2}-n_{s-3})+...+(n_{3}-n_{2})+1/2]}\\
&\quad J_{\frac{(n+1-n_{2})(d+3)}{2}-[(n_s-n_{s-1})+(n_{s-2}-n_{s-3})+...+(n_{3}-n_{2})+1/2]}(c(t-\sum_{k=1}^{n_{2}}\tau_k)||\underline{\alpha}_d||)\\
&=\left\{\frac{2^{d/2}\Gamma(1+d/2)}{\Gamma(d+1)}\right\}^{n+1}\frac{\Gamma((n+1)(d+1))}{t^{(n+1)(d+1)-1}}\left(-\frac{\alpha_{d}^2}{||\underline{\alpha}_d||^2}\right)^{n+1-j}\\
&\quad\frac{((d+1)/2)^{n+1-j}\Gamma^{n+1}((d+1)/2)}{(c||\underline{\alpha}_d||)^{(n+1)(d+1)-1}(\sqrt{2\pi})^n\Gamma(\frac{(n+1)(d+3)}{2}-j)}(ct||\underline{\alpha}_d||)^{\frac{(n+1)(d+3)}{2}-j-\frac12}J_{\frac{(n+1)(d+3)}{2}-(j+\frac12)}(ct||\underline{\alpha}_d||)\\
&= \frac{\sqrt{\pi}\Gamma((n+1)(d+1))}{2^{\frac{(d+1)(n+1)-1}{2}}\Gamma(\frac{(n+1)(d+3)}{2}-j)}\frac{\left(-\frac{\alpha_{d}^2}{||\underline{\alpha}_d||^2}((d+1)/2)\right)^{n+1-j}}{(ct||\underline{\alpha}_d||)^{\frac{(n+1)(d-1)+2j-1}{2}}}J_{\frac{(n+1)(d+3)-(2j+1)}{2}}(ct||\underline{\alpha}_d||).
   \end{align*}
 
The integrals $\mathcal{J}_{n,j}^{n_1,n_2,...,n_s}(\underline{\alpha}_d)$ have different configurations involving the Bessel functions $J_{\frac{d}{2}}(c\tau_k||\underline{\alpha}_d||)$ and  $J_{\frac{d}{2}-1}(c\tau_k||\underline{\alpha}_d||)$, according to the values $n_1,n_2,...,n_s$. Nevertheless, from the previous calculations, we observed that the value of $\mathcal{J}_{n,j}^{n_1,n_2,...,n_s}(\underline{\alpha}_d)$ is the same if the $j$ does not change for different combinations of  $n_1,n_2,...,n_s$. In other words, in order to calculate the values of the different terms of the sum appearing in \eqref{eq:cfintegralsum}, we have to consider the number of Bessel functions with index $\frac d2$ (or $\frac d2-1$) appearing in \eqref{eq:mixintgen}. Then, we can conclude that
   \begin{equation}
   \mathcal{F}_n^1(\underline{\alpha}_d)=\sum_{j=0}^{n+1}\binom{n+1}{j}\mathcal{J}_{n,j}^{n_1,n_2,...,n_s}(\underline{\alpha}_d)
   \end{equation}
and the proof is completed.
\end{proof}

\begin{remark}
The analytic form of the characteristic function $\mathcal{F}_n^1(\underline{\alpha}_d)$ is more complicated than its counterpart in the uniform case ($\nu=0$) (see formula (2.1) in De Gregorio and Orsingher, 2011) which is only given by a Bessel function (with a suitable constant). This is because  the random motions considered in this paper have drift. Indeed, the choice of directions non-uniformly distributed, for each step performed by the motion, implies the loss of spherical symmetry. 
\end{remark}
\begin{remark}
By means of \eqref{eq:int2} and \eqref{eq:int3}, the characteristic function of $\underline{\bf X}_d(t),t>0,$ could be calculated for $\nu=2,3$. Clearly, in these cases the necessary calculations for completing the proof  follow the same steps of the proof of Theorem \ref{teo:cfnu1} are very long and cumbersome. Indeed, the expression \eqref{eq:cfintegralsum} involves the product of linear combinations of Bessel functions.  
\end{remark}

Theorem \ref{teo:cfnu1} allows to show our next result. We are able to provide the density law of $\underline{\bf X}_d(t),t>0,$ (up to some positive constants).  
\begin{theorem}
The density function of $\underline{\bf X}_d(t),t>0,$ for $\nu=1$ assumes the following form
\begin{align}\label{eq:densnu1}
p_n^1(\underline{\bf x}_d,t)&=\frac{\Gamma((n+1)(d+1))}{\pi^{\frac{d-1}{2}}(2ct)^{(n+1)(d+1)-1}}\sum_{j=0}^{n+1}(-1)^{n+1-j}\binom{n+1}{j}\frac{\left(\frac{d+1}{2}\right)^{n+1-j}}{\Gamma(\frac{(n+1)(d+3)}{2}-j)}\notag\\
&\quad\sum_{k=0}^{n+1-j}\frac{(-1)^ka_{k,n+1-j}x_d^{2k}}{\Gamma\left(\frac{n(d+1)}{2}-k+1\right)}(c^2t^2-||\underline{\bf x}_d||^2)^{\frac{n(d+1)}{2}-k}
\end{align}
where $||\underline{\bf x}_d||<ct$ and $a_{k,n+1-j}$s are positive constants defined as in \eqref{eq:intgeneral}. 
\end{theorem}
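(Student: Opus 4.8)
The plan is to invert the characteristic function \eqref{eq:cfnu1} through the $d$-dimensional Fourier inversion formula, exactly as in the proof of Theorem \ref{th2}, treating the finite sum over $j$ term by term. Writing $p=n+1-j$ and $\rho=||\underline{\alpha}_d||$, the $j$-th summand of \eqref{eq:cfnu1} is a radial factor times the purely angular weight $\bigl(\alpha_d^{2}/\rho^{2}\bigr)^{p}$. A short bookkeeping check shows that the two exponents appearing there, $\lambda_j=\frac{(n+1)(d-1)+2j-1}{2}$ and $\mu_j=\frac{(n+1)(d+3)-(2j+1)}{2}$, satisfy $\lambda_j+2p=\mu_j$; this is precisely what makes the whole summand collapse into $(ct)^{-\lambda_j}\,\alpha_d^{2p}\,\rho^{-\mu_j}J_{\mu_j}(ct\rho)$, i.e. a single power $\alpha_d^{2p}$ multiplying the isotropic kernel $\rho^{-\mu_j}J_{\mu_j}(ct\rho)$.

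The key observation is that multiplication by $\alpha_d^{2p}$ in Fourier space corresponds to differentiation in physical space, since $\alpha_d^{2p}e^{-i<\underline{\alpha}_d,\underline{\bf x}_d>}=(-1)^{p}\partial_{x_d}^{2p}e^{-i<\underline{\alpha}_d,\underline{\bf x}_d>}$. I would therefore first invert the isotropic part alone, that is compute
\[
G_j(r)=\frac{1}{(2\pi)^d}\int_{\mathbb{R}^d}e^{-i<\underline{\alpha}_d,\underline{\bf x}_d>}\frac{J_{\mu_j}(ct\rho)}{\rho^{\mu_j}}\,d\underline{\alpha}_d,\qquad r=||\underline{\bf x}_d||,
\]
by carrying out the angular integration via \eqref{intangle2} and then the surviving Hankel integral $\int_0^\infty\rho^{\frac d2-\mu_j}J_{\mu_j}(ct\rho)J_{\frac d2-1}(r\rho)\,d\rho$ through formula \eqref{formula3}. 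Since $\mu_j-\tfrac d2=\tfrac{n(d+1)}2+p>0$ for $n\ge1$, this gives $G_j(r)=\kappa_j\,(c^2t^2-r^2)^{\frac{n(d+1)}2+p}$ on $\mathcal{H}_{ct}^d$ for an explicit constant $\kappa_j$ (and $0$ outside), and then $(-1)^{p}\partial_{x_d}^{2p}G_j$ is exactly what generates the inner sum over $k$.

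The heart of the computation is thus the one-variable identity for $\partial_{x_d}^{2p}(c^2t^2-||\underline{\bf x}_d||^2)^{Q}$ with $Q=\frac{n(d+1)}2+p$. Differentiating $2p$ times only in $x_d$ yields
\[
\partial_{x_d}^{2p}\bigl(c^2t^2-||\underline{\bf x}_d||^2\bigr)^{Q}=\sum_{k=0}^{p}(-1)^{p-k}\,c_{k,p}\,x_d^{2k}\bigl(c^2t^2-||\underline{\bf x}_d||^2\bigr)^{Q-p-k},
\]
where the $c_{k,p}$ are the purely combinatorial coefficients recorded in \eqref{eq:intgeneral}. Because $Q-p-k=\frac{n(d+1)}2-k$, the powers $x_d^{2k}$ and the factors $(c^2t^2-||\underline{\bf x}_d||^2)^{\frac{n(d+1)}2-k}$ pair up exactly as in \eqref{eq:densnu1}; moreover the ratios $\Gamma(Q+1)/\Gamma(Q-p-k+1)=\Gamma(\mu_j-\tfrac d2+1)/\Gamma(\tfrac{n(d+1)}2-k+1)$ produced by the repeated differentiation cancel against the $\Gamma(\mu_j-\tfrac d2+1)$ hidden in $\kappa_j$, leaving precisely the denominator $\Gamma(\tfrac{n(d+1)}2-k+1)$ of \eqref{eq:densnu1} and isolating the positive constants $a_{k,n+1-j}$. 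Finally I would reassemble $C_j(-1)^{p}(ct)^{-\lambda_j}\kappa_j$ together with $\binom{n+1}{j}$ and simplify the powers of $2$, of $ct$ and the Gamma functions (using the duplication-type identities already invoked at the close of the proof of Theorem \ref{teo1}) to recover the prefactor $\frac{\Gamma((n+1)(d+1))}{\pi^{(d-1)/2}(2ct)^{(n+1)(d+1)-1}}$ and the coefficient $(-1)^{n+1-j}\binom{n+1}{j}\frac{((d+1)/2)^{n+1-j}}{\Gamma(\frac{(n+1)(d+3)}2-j)}$ of \eqref{eq:densnu1}.

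The main obstacle is not any individual integral but the justification that the inversion may be performed term by term and that $\alpha_d^{2p}$ may be passed under the integral as $(-1)^{p}\partial_{x_d}^{2p}$: the exchange is immediate as a distributional statement, and it is legitimate to read it as the classical derivative in the open ball $\mathcal{H}_{ct}^d$, where $(c^2t^2-||\underline{\bf x}_d||^2)^{Q}$ is smooth, so the pointwise expression \eqref{eq:densnu1} is valid there; the behaviour on the boundary sphere (where, for small $n$ and $d$, some exponents $\frac{n(d+1)}2-k$ may be negative, producing integrable singularities) must be handled separately. The remaining effort is the routine algebraic bookkeeping of the constants $a_{k,n+1-j}$ through \eqref{eq:intgeneral}.
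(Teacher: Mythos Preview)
Your route is valid and is genuinely different from the paper's. The paper does not use the Fourier--differentiation duality $\alpha_d^{2p}\leftrightarrow(-1)^p\partial_{x_d}^{2p}$ at all: it passes to hyperspherical coordinates in $\underline{\alpha}_d$-space, evaluates the $\phi$-integral directly through the Appendix identity \eqref{eq:intgeneral} (this is precisely where the constants $a_{k,n+1-j}$ enter), then handles the remaining $d-2$ angular integrals by the method leading to \eqref{intangle}, and finishes with the Hankel integral \eqref{formula3}. Your approach inverts the isotropic piece first and pushes the anisotropy into a derivative in physical space; this is cleaner conceptually and sidesteps the long angular computation, at the cost of one nontrivial identification that the paper gets for free.

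That identification is the one real gap. Your assertion that the coefficients $c_{k,p}$ in
\[
\partial_{x_d}^{2p}\bigl(c^2t^2-\|\underline{\bf x}_d\|^2\bigr)^{Q}=\sum_{k=0}^{p}(-1)^{p-k}c_{k,p}\,x_d^{2k}\bigl(c^2t^2-\|\underline{\bf x}_d\|^2\bigr)^{Q-p-k}
\]
are ``the purely combinatorial coefficients recorded in \eqref{eq:intgeneral}'' is not something the Appendix states; \eqref{eq:intgeneral} is an angular-integral identity, not a derivative formula. What is actually true (and what you need) is $c_{k,p}=2^{p}\,a_{k,p}\,\Gamma(Q+1)/\Gamma(Q-p-k+1)$, as one checks for $p=1,2$ against \eqref{const}. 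The extra $2^{p}$ must be tracked together with the powers of $2$ you mention at the end, and the $\Gamma$-ratio is the cancellation against $\kappa_j$ you describe; but the equality of the residual constants with the Appendix $a_{k,p}$ still has to be \emph{proved}. The quickest way to close this is to note that $\sin^{2p}\phi\,e^{iz\beta\sin\phi}=(-1)^p z^{-2p}\partial_\beta^{2p}e^{iz\beta\sin\phi}$, so \eqref{eq:intgeneral} itself is equivalently a formula for $\partial_\beta^{2p}J_0\bigl(z\sqrt{\alpha^2+\beta^2}\bigr)$; carrying this through the Hankel transform \eqref{formula3} (which intertwines $J_0$-type kernels with the powers $(c^2t^2-r^2)^{Q}$) yields exactly the derivative identity you need with the correct $a_{k,p}$. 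Once that link is made explicit, your argument is complete and arguably more transparent than the paper's direct computation.
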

\begin{proof}
We invert the characteristic function \eqref{eq:cfnu1} as follows
\begin{align*}
&p_n^1(\underline{\bf x}_d,t)\\
&=\frac{1}{(2\pi)^{d}}\int_{\mathbb{R}^d}e^{-i<\underline{\alpha}_d,\underline{\bf x}_d>}\mathcal{F}_n^1(\underline{\alpha}_d)\prod_{i=1}^dd\alpha_i\\
&=\frac{\sqrt{\pi}\Gamma((n+1)(d+1))}{(2\pi)^{d}2^{\frac{(d+1)(n+1)-1}{2}}}\sum_{j=0}^{n+1}(-1)^{n+1-j}\binom{n+1}{j}\frac{\left(\frac{d+1}{2}\right)^{n+1-j}}{(ct)^{\frac{(n+1)(d-1)+2j-1}{2}}\Gamma(\frac{(n+1)(d+3)}{2}-j)}\\
&\quad\int_{\mathbb{R}^d}e^{-i<\underline{\alpha}_d,\underline{\bf x}_d>}\left(\frac{\alpha_{d}^2}{||\underline{\alpha}_d||^2}\right)^{n+1-j}\frac{J_{\frac{(n+1)(d+3)-(2j+1)}{2}}(ct||\underline{\alpha}_d||)}{||\underline{\alpha}_d||^{\frac{(n+1)(d-1)+2j-1}{2}}}\prod_{i=1}^dd\alpha_i\\
&=\frac{\sqrt{\pi}\Gamma((n+1)(d+1))}{(2\pi)^{d}2^{\frac{(d+1)(n+1)-1}{2}}}\sum_{j=0}^{n+1}(-1)^{n+1-j}\binom{n+1}{j}\frac{\left(\frac{d+1}{2}\right)^{n+1-j}}{(ct)^{\frac{(n+1)(d-1)+2j-1}{2}}\Gamma(\frac{(n+1)(d+3)}{2}-j)}\\
&\quad\int_0^\infty \rho^{d-1}d\rho\int_0^\pi d\theta_1\cdots\int_0^{\pi}d\theta_{d-2}\int_0^{2\pi}d\phi e^{-i\rho(x_d\sin\theta_1\cdots\sin\phi+x_{d-1}\sin\theta_1\cdots\cos\phi+...+x_2\sin\theta_1\cos\theta_2+x_1\cos\theta_1)}\\
&\quad\left(\sin\theta_1\cdots\sin\theta_{d-2}\sin\phi\right)^{2(n+1-j)}\frac{J_{\frac{(n+1)(d+3)-(2j+1)}{2}}(ct\rho)}{\rho^{\frac{(n+1)(d-1)+2j-1}{2}}}\sin\theta_1^{d-2}\cdots\sin\theta_{d-2}.
\end{align*}

By taking into account the formula \eqref{eq:intgeneral}, the integral with respect to $\phi$ becomes
\begin{align*}
&\int_0^{2\pi} e^{-i\rho(x_d\sin\theta_1\cdots\sin\theta_{d-2}\sin\phi+x_{d-1}\sin\theta_1\cdots\sin\theta_{d-2}\cos\phi)}\sin\phi^{2(n+1-j)}d\phi\\
&=2\pi\sum_{k=0}^{n+1-j}\frac{(-1)^ka_{k,n+1-j}x_d^{2k}J_{n+1-j+k}(\rho\sin\theta_1\cdots\sin\theta_{d-2}\sqrt{x_d^2+x_{d
-1}^2})
}{2^k(\rho\sin\theta_1\cdots\sin\theta_{d-2})^{n+1-j-k}(\sqrt{x_d^2+x_{d
-1}^2})^{n+1-j+k} }.
\end{align*}
Then
\begin{align*}
&p_n^1(\underline{\bf x}_d,t)\\
&=\frac{\sqrt{\pi}\Gamma((n+1)(d+1))}{(2\pi)^{d-1}2^{\frac{(d+1)(n+1)-1}{2}}}\sum_{j=0}^{n+1}(-1)^{n+1-j}\binom{n+1}{j}\frac{\left(\frac{d+1}{2}\right)^{n+1-j}}{(ct)^{\frac{(n+1)(d-1)+2j-1}{2}}\Gamma(\frac{(n+1)(d+3)}{2}-j)}\\
&\quad\sum_{k=0}^{n+1-j}\frac{(-1)^ka_{k,n+1-j}x_d^{2k}}{2^k}\int_0^\infty \rho^{d-1-(n+1-j-k)}\frac{J_{\frac{(n+1)(d+3)-(2j+1)}{2}}(ct\rho)}{\rho^{\frac{(n+1)(d-1)+2j-1}{2}}}d\rho\\
&\quad\int_0^\pi d\theta_1\cdots\int_0^{\pi}d\theta_{d-2}e^{-i\rho(x_{d-2}\sin\theta_1\cdots\sin\theta_{d-3}\cos\theta_{d-2}+...+x_1\cos\theta_1)}\sin\theta_1^{d-2}\cdots\sin\theta_{d-2}\\
&\quad \frac{\left(\sin\theta_1\cdots\sin\theta_{d-2}\right)^{n+1-j+k}}{(\sqrt{x_d^2+x_{d
-1}^2})^{n+1-j+k}}J_{n+1-j+k}(\rho\sin\theta_1\cdots\sin\theta_{d-2}\sqrt{x_d^2+x_{d
-1}^2})
\end{align*}

By means of the same approach used to get \eqref{intangle}, the $(d-2)$-fold integral with respect to the angles becomes
\begin{align*}
&\int_0^\pi d\theta_1\cdots\int_0^{\pi}d\theta_{d-2}e^{-i\rho(x_{d-2}\sin\theta_1\cdots\sin\theta_{d-3}\cos\theta_{d-2}+...+x_1\cos\theta_1)}\sin\theta_1^{d-2}\cdots\sin\theta_{d-2}\\
&\quad \frac{\left(\sin\theta_1\cdots\sin\theta_{d-2}\right)^{n+1-j+k}}{(\sqrt{x_d^2+x_{d
-1}^2})^{n+1-j+k}}J_{n+1-j+k}(\rho\sin\theta_1\cdots\sin\theta_{d-2}\sqrt{x_d^2+x_{d
-1}^2})\\
&=\frac{(2\pi)^{\frac d2-1}}{\rho^{\frac d2-1}||\underline{\bf x}_d||^{n+1-j+k+\frac d2-1}}J_{n+1-j+k+\frac d2-1}(\rho ||\underline{\bf x}_d||).
\end{align*}

Therefore, the formula \eqref{formula3} implies that
\begin{align*}
&p_n^1(\underline{\bf x}_d,t)\\
&=\frac{\sqrt{\pi}\Gamma((n+1)(d+1))}{(2\pi)^{\frac d2}2^{\frac{(d+1)(n+1)-1}{2}}}\sum_{j=0}^{n+1}(-1)^{n+1-j}\binom{n+1}{j}\frac{\left(\frac{d+1}{2}\right)^{n+1-j}}{(ct)^{\frac{(n+1)(d-1)+2j-1}{2}}\Gamma(\frac{(n+1)(d+3)}{2}-j)}\\
&\quad\sum_{k=0}^{n+1-j}\frac{(-1)^ka_{k,n+1-j}}{2^k}\frac{x_d^{2k}}{||\underline{\bf x}_d||^{n+1-j+k+\frac d2-1}}\int_0^\infty \frac{J_{\frac{(n+1)(d+3)-(2j+1)}{2}}(ct\rho)J_{n+1-j+k+\frac d2-1}(\rho ||\underline{\bf x}_d||)}{\rho^{n\frac{(d+1)}{2}-k}}d\rho\\
&=(\text{by}\, \eqref{formula3})\\
&=\frac{\sqrt{\pi}\Gamma((n+1)(d+1))}{(2\pi)^{\frac d2}2^{\frac{(d+1)(n+1)-1}{2}}}\sum_{j=0}^{n+1}(-1)^{n+1-j}\binom{n+1}{j}\frac{\left(\frac{d+1}{2}\right)^{n+1-j}}{(ct)^{\frac{(n+1)(d-1)+2j-1}{2}}\Gamma(\frac{(n+1)(d+3)}{2}-j)}\\
&\quad\sum_{k=0}^{n+1-j}\frac{(-1)^ka_{k,n+1-j}}{2^{\frac{n(d+1)}{2}}}\frac{x_d^{2k}}{\Gamma\left(\frac{n(d+1)}{2}-k+1\right)(ct)^{\frac{(n+1)(d+3)-(2j+1)}{2}}}(c^2t^2-||\underline{\bf x}_d||^2)^{\frac{n(d+1)}{2}-k}\\
&=\frac{\Gamma((n+1)(d+1))}{\pi^{\frac{d-1}{2}}(2ct)^{(n+1)(d+1)-1}}\sum_{j=0}^{n+1}(-1)^{n+1-j}\binom{n+1}{j}\frac{\left(\frac{d+1}{2}\right)^{n+1-j}}{\Gamma(\frac{(n+1)(d+3)}{2}-j)}\\
&\quad\sum_{k=0}^{n+1-j}\frac{(-1)^ka_{k,n+1-j}x_d^{2k}}{\Gamma\left(\frac{n(d+1)}{2}-k+1\right)}(c^2t^2-||\underline{\bf x}_d||^2)^{\frac{n(d+1)}{2}-k}.
\end{align*}
\end{proof}

It is worth to point out again that the result stated in the above Theorem does not provide the whole analytic form of the density law of the random flight with $\nu=1$. Indeed the constants appearing in the expression \eqref{eq:densnu1} are not determined for an arbitrary value of $n$, but only for some values of $n$ and $n+1-j$. For instance $a_{0,n+1-j}=(2(n+1-j)-1)(2(n+1-j)-3)\cdots 3\cdot 1$ and $a_{n+1-j,n+1-j}=2^{n+1-j}$  (see formula \eqref{eq:intgeneral} in Appendix). Furthermore, in a few cases we can explicit $p_n^1(\underline{\bf x}_d,t)$. By taking into account \eqref{const}, for $n=1$ and $n=2$, after some calculations, we obtain that
\begin{align}\label{eq:rannonun1}
p_1^1(\underline{\bf x}_d,t)&=\frac{\Gamma(2(d+1))}{\pi^{\frac{d-1}{2}}(2ct)^{2d+1}(d+2)\Gamma(d+1)\Gamma(\frac{d-1}{2})}\notag\\
&\quad\left\{\frac{3}{d-1}(c^2t^2-||\underline{\bf x}_d||^2)^{\frac{d+1}{2}}-2x_d^2(c^2t^2-||\underline{\bf x}_d||^2)^{\frac{d-1}{2}}+(d+1)x_d^4(c^2t^2-||\underline{\bf x}_d||^2)^{\frac{d-3}{2}}\right\}
\end{align}
and
\begin{align}\label{eq:rannonun2}
p_2^1(\underline{\bf x}_d,t)&=\frac{\Gamma(3d+3)(d+1)}{\pi^{\frac{d-1}{2}}(2ct)^{3d+2}\Gamma(d-1)\Gamma(\frac{3(d+3)}{2}-3)(3d+7)(3d+5)}\notag\\
&\quad\Bigg\{\frac{4(d+4)}{(d+1)d(d-1)}(c^2t^2-||\underline{\bf x}_d||^2)^{d+1}+\frac{2x_d^2(6d^2+6d+8)}{(d+1)d(d-1)}(c^2t^2-||\underline{\bf x}_d||^2)^d\notag\\
&\quad-8 x_d^4(c^2t^2-||\underline{\bf x}_d||^2)^{d-1}+\frac{8}{3}(d+1)x_d^6(c^2t^2-||\underline{\bf x}_d||^2)^{d-2}\Bigg\}
\end{align}

Let $f_{n,d}^\nu(r,t)$ be the density function of $R_d(t)=||\underline{\bf X}_d(t)||,t>0,$ with $0<r<ct$. As done for obtaining \eqref{densradial}, from \eqref{eq:rannonun1} and \eqref{eq:rannonun2} we get that
\begin{align}\label{eq:rannonunrad1}
f_{1,d}^1(r,t)&=\frac{\Gamma(2(d+1))2\sqrt{\pi}}{(2ct)^{2d+1}(d+2)\Gamma(d+1)\Gamma(\frac{d-1}{2})\Gamma(\frac d2)}\notag\\
&\quad\left\{\frac{3}{d-1}r^{d-1}(c^2t^2-r^2)^{\frac{d+1}{2}}-\frac 2dr^{d+1}(c^2t^2-r^2)^{\frac{d-1}{2}}+\frac{3(d+1)}{d(d+2)}r^{d+3}(c^2t^2-r^2)^{\frac{d-3}{2}}\right\}
\end{align}
and
\begin{align}\label{eq:rannonunrad2}
f_{2,d}^1(r,t)&=\frac{\Gamma(3d+3)(d+1)2\sqrt{\pi}}{(2ct)^{3d+2}\Gamma(d-1)\Gamma(\frac{3(d+3)}{2}-3)\Gamma(\frac{d}{2})(3d+7)(3d+5)}\notag\\
&\quad\Bigg\{\frac{4(d+4)}{(d+1)d(d-1)}r^{d-1}(c^2t^2-r^2)^{d+1}+\frac{2(6d^2+6d+8)}{(d+1)d^2(d-1)}r^{d+1}(c^2t^2-r^2)^d\notag\\
&\quad-\frac{24}{d(d+2)} r^{d+3}(c^2t^2-r^2)^{d-1}+\frac{40(d+1)}{d(d+2)(d+4)}r^{d+5}(c^2t^2-r^2)^{d-2}\Bigg\}
\end{align}

\begin{remark}
The expressions \eqref{eq:rannonun1} and \eqref{eq:rannonun2} represent proper density functions. Indeed, it is possible to verify after some calculations that $\int_{\mathcal{H}_{ct}^d} p_n^1(\underline{\bf x}_d,t)d\underline{\bf x}_d=1,n=1,2$. Furthermore, $p_1^1(\underline{\bf x}_d,t)$ and $p_2^1(\underline{\bf x}_d,t)$, or equivalently $f_{1,d}^1(r,t)$ and $f_{2,d}^1(r,t)$, are non negative functions (see Figure \ref{radial}).

 \begin{figure}[t]
 \begin{center}
\includegraphics[angle=0,width=1\textwidth]{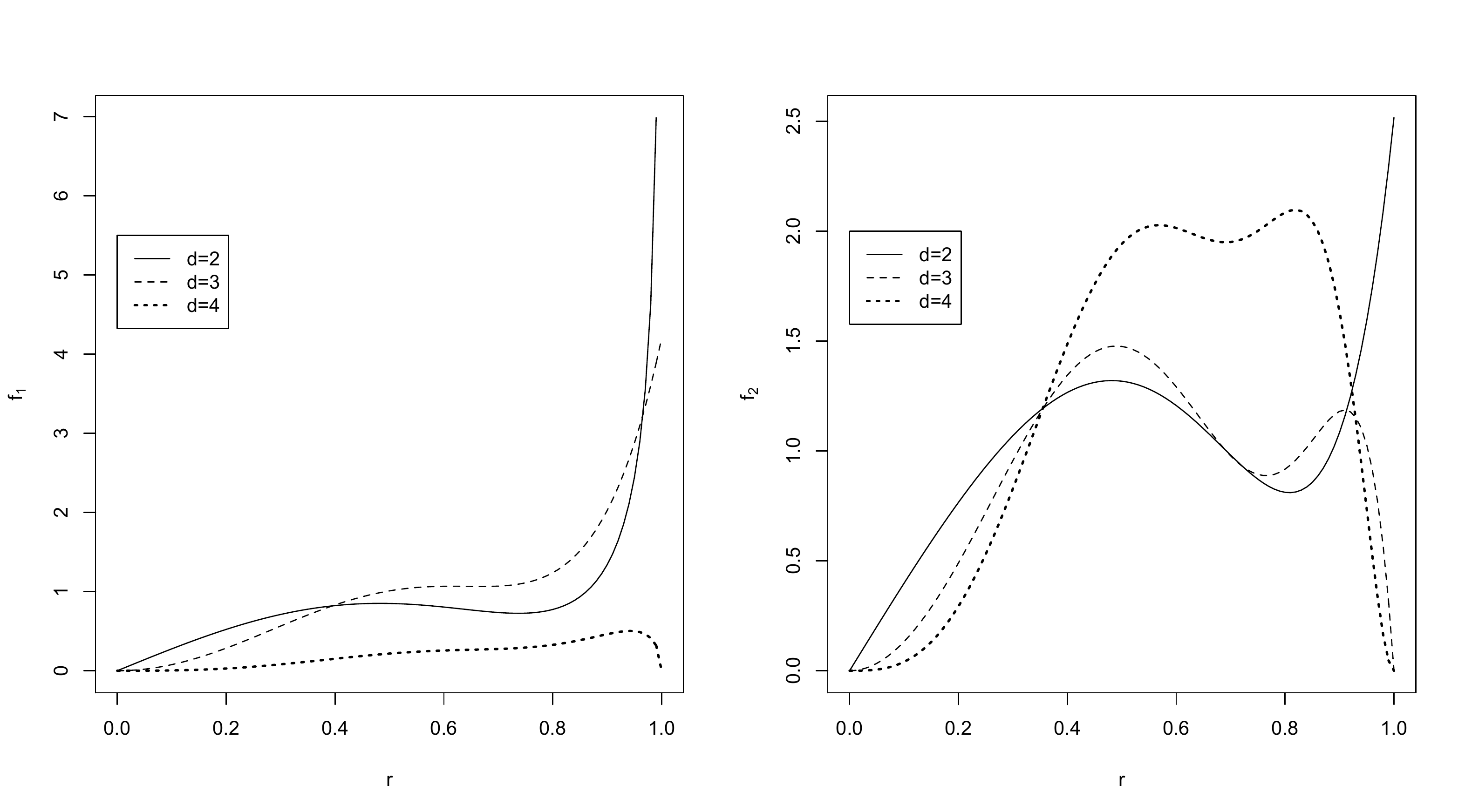}
\caption{The behavior of the density functions $f_1=f_{1,d}^1(r,1)$ (on the left) and $f_2=f_{2,d}^1(r,1)$ (on the right) for $d=2,3,4$ and $c=1$.}\label{radial}
\end{center}
\end{figure}

\end{remark}

\begin{remark}
It is useful to compare the density functions $p_1^1(\underline{\bf x}_d,t)$ and $p_2^1(\underline{\bf x}_d,t)$ with the probability distributions obtained in the uniform case ($\nu=0$) by Orsingher and De Gregorio (2007). As emerges from Table \ref{tab:first}-\ref{tab:second}, the uniform random flight and the random motion with drift  lead to quite different distributions.

\begin{table}[h] 
\small{
\begin{tabular}{c|l|l }
&$\nu=0$&$\nu=1$\\
\hline
    $d=2$     &$\frac{1}{2\pi ct\sqrt{c^2t^2-||\underline{\bf x}_2||^2}}$& $\frac{15}{2^4\pi(ct)^5}\left\{\frac32(c^2t^2-||\underline{\bf x}_2||^2)^{\frac32}
-x_2^2\sqrt{c^2t^2-||\underline{\bf x}_2||^2}+\frac{3x_2^4}{2\sqrt{c^2t^2-||\underline{\bf x}_2||^2}}\right\}$\\&&\\
       $d=3$     &$\frac{\log(\frac{c^2t^2+||\underline{\bf x}_3||}{c^2t^2-||\underline{\bf x}_3||})}{\pi (2ct)^2||\underline{\bf x}_3||}$    &$\frac{21}{2^4\pi(ct)^7}\left\{\frac32(c^2t^2-||\underline{\bf x}_3||^2)^{2}
-2x_3^2(c^2t^2-||\underline{\bf x}_3||^2)+4x_3^4\right\}$ \\&&\\
      $d=4$    &$\frac{2}{\pi^2(ct)^4}$&$\frac{3^2\cdot 7\cdot 5}{2^5\pi^2(ct)^9}\left\{(c^2t^2-||\underline{\bf x}_4||^2)^{\frac52}
-2x_4^4(c^2t^2-||\underline{\bf x}_4||^2)^{\frac32}+5x_4^4\sqrt{c^2t^2-||\underline{\bf x}_4||^2}\right\}$    
      \end{tabular}
   \caption{Density function $p_1^\nu(\underline{\bf x}_d,t)$ for $\nu=0,1,$ and $d=2,3,4$.}
   \label{tab:first}}
\end{table}
\begin{table}[h] 
\small{
\begin{tabular}{c|l|l }
&$\nu=0$&$\nu=1$\\
\hline
    $d=2$     &$\frac{1}{\pi (ct)^2}$& $\frac{2^5\cdot 3^2}{\pi(ct)^813\cdot 11}\Bigg\{(c^2t^2-||\underline{\bf x}_2||^2)^{3}
+\frac{11}{3}x_2^2(c^2t^2-||\underline{\bf x}_2||^2)^2$\\
&&$-2x_2^4(c^2t^2-||\underline{\bf x}_2||^2)+2x_2^6\Bigg\}$\\&&\\
      $d=4$    &$\frac{6}{\pi^2(ct)^6}(c^2t^2-||\underline{\bf x}_4||^2)$&  $\frac{2^6\cdot 3^2\cdot 7\cdot 5}{\pi^2(ct)^{14}19\cdot 17}\Bigg\{\frac{1}{15}(c^2t^2-||\underline{\bf x}_4||^2)^{5}
+\frac{8}{15}x_4^2(c^2t^2-||\underline{\bf x}_4||^2)^4$\\
&&$-x_4^4(c^2t^2-||\underline{\bf x}_4||^2)^3+\frac53x_4^6(c^2t^2-||\underline{\bf x}_4||^2)^2\Bigg\}$   
      \end{tabular}
   \caption{Density function $p_2^\nu(\underline{\bf x}_d,t)$ for $\nu=0,1,$ and $d=2,4$.}
   \label{tab:second}}
\end{table}

\end{remark}

\begin{remark}
From the results \eqref{eq:densnu1}, \eqref{eq:rannonun1} and \eqref{eq:rannonun2} emerge that the probability distributions of the random flight with $\nu=1$ are a linear combination of $n+2$ functions 
$$x_d^{2k}(c^2t^2-||\underline{\bf x}_d||^2)^{\frac{n(d+1)}{2}-k}$$
with $k=0,1,...,n+1$. Furthermore, \eqref{eq:densnu1} permits to claim that, as expected, the random flight with drift is not isotropic. It is interesting to point out that the observer in $\mathbb{R}^m,m<d$, perceives the random motion as an isotropic walk (see Remark \ref{rem}), while the original motion is non-isotropic. 
\end{remark}

\section{Appendix}
In the random flight problem the integrals involving Bessel functions have a crucial role. For this reason in this Section we summarize some important results which are often used in the manuscript.

In particular we focus our attention on the following integral 
\begin{equation}\label{eq:intgen}
\int_0^{2\pi}e^{iz(\alpha\cos\theta+\beta\sin\theta)}\sin^{2\nu}\theta d\theta
\end{equation}
with $\alpha,\beta\in\mathbb{R}$ and $\nu\geq 0$. We are able to calculate \eqref{eq:intgen} in some particular cases. For example, for $\nu=0$ it is well-known that

\begin{equation}\label{eq:int0}
\int_0^{2\pi}e^{iz(\alpha\cos\theta+\beta\sin\theta)}d\theta=2\pi J_0(z\sqrt{\alpha^2+\beta^2}).
\end{equation}

If $\nu=1$, we get that
\begin{equation}\label{eq:int}
\int_0^{2\pi}e^{iz(\alpha\cos\theta+\beta\sin\theta)}\sin^{2}\theta d\theta=2\pi\left\{\frac{J_1(z\sqrt{\alpha^2+\beta^2})}{z\sqrt{\alpha^2+\beta^2}}-\frac{\beta^2}{\alpha^2+\beta^2}J_2(z\sqrt{\alpha^2+\beta^2})\right\}.
\end{equation}
as proved in
 De Gregorio (2010). The previous result has been obtained by expanding the exponential function inside the integral. Hence, one observes that splitting the integral into two parts, i.e. $\int_0^{2\pi}=\int_0^{\pi}+\int_\pi^{2\pi}$, after a change of variable, the sum is not equal to zero if the index values are even.
 
Following the same approach developed in De Gregorio (2010) we can write that
\begin{align*}
&\int_0^{2\pi}\exp\{iz(\alpha\cos\theta+\beta\sin\theta)\}\sin^{4}\theta d\theta\notag\\
&=4\sum_{k=0}^{\infty} \frac{(-1)^k}{(2k!)}z^{2k}\sum_{r=0}^k\binom{2k}{2r}\alpha^{2r}\beta^{2(k-r)}\int_0^{\pi/2}\cos^{2r}\theta\sin^{2(k-r+2)}\theta d\theta\notag\\
&=2\sum_{k=0}^{\infty} \frac{(-1)^k}{(2k!)}z^{2k}\sum_{r=0}^k\binom{2k}{2r}\alpha^{2r}\beta^{2(k-r)}\frac{\Gamma(r+1/2)\Gamma(k-r+2+1/2)}{\Gamma(k+3)}\notag\\
&=2\pi\sum_{k=0}^{\infty} \frac{(-1)^k}{(2k!)}\frac{z^{2k}}{\Gamma(k+3)}\sum_{r=0}^k\frac{(2k)!}{(2r)!(2(k-r))!}\alpha^{2r}\beta^{2(k-r)}\frac{\Gamma(2r)\Gamma(2(k-r+2))2^{2-2(k+2)}}{\Gamma(r)\Gamma(k-r+2)}\notag\\
&=\frac{\pi}{2^{2}}\sum_{k=0}^{\infty} \frac{(-1)^k}{\Gamma(k+3)}\left(\frac{z}{2}\right)^{2k}\sum_{r=0}^k\alpha^{2r}\beta^{2(k-r)}\frac{(2(k-r)+3)!}{r!(2(k-r))!(k-r+1)!}\notag\\
&=\frac{\pi}{2^{2}}\sum_{k=0}^{\infty} \frac{(-1)^k}{\Gamma(k+3)}\left(\frac{z}{2}\right)^{2k}\sum_{r=0}^k\alpha^{2r}\beta^{2(k-r)}\frac{(2(k-r)+3)(2(k-r)+2)(2(k-r)+1)}{r!(k-r+1)!}\notag\\
&=\frac{\pi}{2}\sum_{k=0}^{\infty} \frac{(-1)^k}{\Gamma(k+3)}\left(\frac{z}{2}\right)^{2k}\sum_{r=0}^k\alpha^{2r}\beta^{2(k-r)}\frac{(2(k-r)+3)(2(k-r)+1)}{r!(k-r)!}\notag\\
&=\frac{\pi}{2}\sum_{k=0}^{\infty} \frac{(-1)^k}{\Gamma(k+3)}\left(\frac{z}{2}\right)^{2k}\sum_{r=0}^k\alpha^{2r}\beta^{2(k-r)}\frac{4(k-r)^2+8(k-r)+3}{r!(k-r)!}\notag\\
&=\frac{\pi}{2}\sum_{k=0}^{\infty} \frac{(-1)^k}{\Gamma(k+3)}\left(\frac{z}{2}\right)^{2k}\sum_{r=0}^k\alpha^{2r}\beta^{2(k-r)}\sum_{j=0}^2\frac{a_{j,2}}{r!(k-r-j)!}\notag
\end{align*}
where $a_{0,2}=3,a_{1,2}=12,a_{2,2}=4$. Therefore, we obtain that
\begin{align}\label{eq:int2}
&\int_0^{2\pi}\exp\{iz(\alpha\cos\theta+\beta\sin\theta)\}\sin^{4}\theta d\theta\notag\\
&=\frac{\pi}{2}\sum_{k=0}^{\infty} \frac{(-1)^k}{\Gamma(k+3)}\left(\frac{z}{2}\right)^{2k}\sum_{j=0}^2\frac{a_{j,2}\beta^{2j}}{(k-j)!}\sum_{r=0}^{k-j}\binom{k-j}{r}\alpha^{2r}\beta^{2(k-j-r)}\notag\\
&=\frac{\pi}{2}\sum_{k=0}^{\infty} \frac{(-1)^k}{\Gamma(k+3)}\left(\frac{z}{2}\right)^{2k}\sum_{j=0}^2\frac{a_{j,2}\beta^{2j}}{(k-j)!}(\alpha^2+\beta^2)^{k-j}\notag\\
&=\frac{\pi}{2}\sum_{j=0}^2a_{j,2}\sum_{k=j}^{\infty}\frac{(-1)^k}{\Gamma(k+3)}\left(\frac{z}{2}\right)^{2k}\frac{\beta^{2j}}{(k-j)!}(\sqrt{\alpha^2+\beta^2})^{2(k-j)}\notag\\
&=\frac{\pi}{2}\sum_{j=0}^2a_{j,2}\beta^{2j}\sum_{k=0}^{\infty}\frac{(-1)^{k+j}}{\Gamma(k+3+j)}\left(\frac{z}{2}\right)^{2(k+j)}\frac{1}{k!}(\sqrt{\alpha^2+\beta^2})^{2k}\notag\\
&=\frac{\pi}{2}\sum_{j=0}^2(-1)^ja_{j,2}\beta^{2j}\left(\frac{z}{2}\right)^{2j}\sum_{k=0}^{\infty}\frac{(-1)^{k}}{k!\Gamma(k+3+j)}\left(\frac{z}{2}\sqrt{\alpha^2+\beta^2}\right)^{2k}\notag\\
&=2\pi\left\{\frac {3}{(z\sqrt{\alpha^2+\beta^2})^2}J_2(z\sqrt{\alpha^2+\beta^2})-\frac{6\beta^2}{z(\sqrt{\alpha^2+\beta^2})^3}J_3(z\sqrt{\alpha^2+\beta^2})+\frac{\beta^4}{(\sqrt{\alpha^2+\beta^2})^4}J_4(z\sqrt{\alpha^2+\beta^2})\right\}
\end{align}

By means of the same arguments we have that
\begin{align}\label{eq:int3}
&\int_0^{2\pi}e^{iz(\alpha\cos\theta+\beta\sin\theta)}\sin^{6}\theta d\theta\notag\\
&=\frac{\pi}{2^{2}}\sum_{k=0}^{\infty} \frac{(-1)^k}{\Gamma(k+n+1)}\left(\frac{z}{2}\right)^{2k}\sum_{r=0}^k\alpha^{2r}\beta^{2(k-r)}\sum_{j=0}^3\frac{a_{j,3}}{r!(k-r-j)!}\notag\\
&=2\pi\Bigg\{\frac {15}{(z\sqrt{\alpha^2+\beta^2})^3}J_3(z\sqrt{\alpha^2+\beta^2})-\frac {45\beta^2}{z^2(\sqrt{\alpha^2+\beta^2})^4}J_4(z\sqrt{\alpha^2+\beta^2})+\frac {15\beta^4}{z(\sqrt{\alpha^2+\beta^2})^5}J_5(z\sqrt{\alpha^2+\beta^2})\notag\\
&\quad-\frac {\beta^6}{(\sqrt{\alpha^2+\beta^2})^6}J_6(z\sqrt{\alpha^2+\beta^2})\Bigg\}
\end{align}
where $a_{0,3}=15,a_{1,3}=90,a_{2,3}=60,a_{3,3}=8$.

For $\nu=n$, with $n\geq 0,$ one has that
\begin{align*}
&\int_0^{2\pi}e^{iz(\alpha\cos\theta+\beta\sin\theta)}\sin^{2n}\theta d\theta\notag\\
&=4\sum_{k=0}^{\infty} \frac{(-1)^k}{(2k!)}z^{2k}\sum_{r=0}^k\binom{2k}{2r}\alpha^{2r}\beta^{2(k-r)}\int_0^{\pi/2}\cos^{2r}\theta\sin^{2(k-r+n)}\theta d\theta\notag\\
&=2\pi\sum_{k=0}^{\infty} \frac{(-1)^k}{(2k!)}z^{2k}\sum_{r=0}^k\binom{2k}{2r}\alpha^{2r}\beta^{2(k-r)}\frac{\Gamma(r+1/2)\Gamma(k-r+n+1/2)}{\Gamma(k+n+1)}\notag\\
&=2\pi\sum_{k=0}^{\infty} \frac{(-1)^k}{(2k!)}\frac{z^{2k}}{\Gamma(k+n+1)}\sum_{r=0}^k\frac{(2k)!}{(2r)!(2(k-r))!}\alpha^{2r}\beta^{2(k-r)}\frac{\Gamma(2r)\Gamma(2(k-r+n))2^{2-2(k+n)}}{\Gamma(r)\Gamma(k-r+n)}\notag\\
&=\frac{\pi}{2^{2n-2}}\sum_{k=0}^{\infty} \frac{(-1)^k}{\Gamma(k+n+1)}\left(\frac{z}{2}\right)^{2k}\sum_{r=0}^k\alpha^{2r}\beta^{2(k-r)}\frac{(2(k-r)+2n-1)!}{r!(2(k-r))!(k-r+n-1)!}\notag\\
&=\frac{\pi}{2^{2n-2}}\sum_{k=0}^{\infty} \frac{(-1)^k}{\Gamma(k+n+1)}\left(\frac{z}{2}\right)^{2k}\sum_{r=0}^k\alpha^{2r}\beta^{2(k-r)}\frac{(2(k-r)+2n-1)(2(k-r)+2n-2)\cdots(2(k-r)+1)}{r!(k-r+n-1)!}\notag\\
&=\frac{\pi}{2^{n-1}}\sum_{k=0}^{\infty} \frac{(-1)^k}{\Gamma(k+n+1)}\left(\frac{z}{2}\right)^{2k}\sum_{r=0}^k\alpha^{2r}\beta^{2(k-r)}\frac{(2(k-r)+2n-1)(2(k-r)+2n-3)\cdots(2(k-r)+1)}{r!(k-r)!}\notag
\end{align*}

Therefore, by taking into account  similar manipulations appearing in \eqref{eq:int}, \eqref{eq:int2} and \eqref{eq:int3}, we can infer that the following equality holds
$$\frac{(2(k-r)+2n-1)(2(k-r)+2n-3)\cdots(2(k-r)+1)}{r!(k-r)!}=\sum_{j=0}^n\frac{a_{j,n}}{r!(k-r-j)!}$$
where $a_{j,n}$s are positive constants. We are not able to explicit $a_{j,n}$ for all values of $n$ and $j$ because  for the above constants seem that a recursive rule does not hold. Nevertheless, it is easy to see that 
$$a_{0,n}=(2n-1)(2n-3)\cdots3\cdot1,\quad a_{n,n}=2^n$$

Furthermore, the cases tackled above can be sum up in the following scheme

\begin{align}\label{const}
&n=0 \to a_{0,0}=1\notag\\
&n=1\to a_{0,1}=1,\,a_{1,1}=2 \notag\\
&n=2\to a_{0,2}=3,\,a_{1,2}=12,\,a_{2,2}=4\\
&n=3\to a_{0,3}=15,\,a_{1,3}=90,\,a_{2,3}=60,\,a_{3,3}=8 \notag
\end{align}

Finally, we get that

\begin{align}\label{eq:intgeneral}
&\int_0^{2\pi}e^{iz(\alpha\cos\theta+\beta\sin\theta)}\sin^{2n}\theta d\theta\notag\\
&=\frac{\pi}{2^{n-1}}\sum_{k=0}^{\infty} \frac{(-1)^k}{\Gamma(k+n+1)}\left(\frac{z}{2}\right)^{2k}\sum_{r=0}^k\alpha^{2r}\beta^{2(k-r)}\sum_{j=0}^n\frac{a_{j,n}}{r!(k-r-j)!}\notag\\
&=\frac{\pi}{2^{n-1}}\sum_{k=0}^{\infty} \frac{(-1)^k}{\Gamma(k+n+1)}\left(\frac{z}{2}\right)^{2k}\sum_{j=0}^n\frac{a_{j,n}\beta^{2j}}{(k-j)!}\sum_{r=0}^{k-j}\binom{k-j}{r}\alpha^{2r}\beta^{2(k-j-r)}\notag\\
&=\frac{\pi}{2^{n-1}}\sum_{k=0}^{\infty} \frac{(-1)^k}{\Gamma(k+n+1)}\left(\frac{z}{2}\right)^{2k}\sum_{j=0}^n\frac{a_{j,n}\beta^{2j}}{(k-j)!}(\alpha^2+\beta^2)^{k-j}\notag\\
&=\frac{\pi}{2^{n-1}}\sum_{j=0}^na_{j,n}\sum_{k=j}^{\infty}\frac{(-1)^k}{\Gamma(k+n+1)}\left(\frac{z}{2}\right)^{2k}\frac{\beta^{2j}}{(k-j)!}(\sqrt{\alpha^2+\beta^2})^{2(k-j)}\notag\\
&=\frac{\pi}{2^{n-1}}\sum_{j=0}^na_{j,n}\beta^{2j}\sum_{k=0}^{\infty}\frac{(-1)^{k+j}}{\Gamma(k+n+j+1)}\left(\frac{z}{2}\right)^{2(k+j)}\frac{1}{k!}(\sqrt{\alpha^2+\beta^2})^{2k}\notag\\
&=\frac{\pi}{2^{n-1}}\sum_{j=0}^n(-1)^ja_{j,n}\beta^{2j}\left(\frac{z}{2}\right)^{2j}\sum_{k=0}^{\infty}\frac{(-1)^{k}}{k!\Gamma(k+n+j+1)}\left(\frac{z}{2}\sqrt{\alpha^2+\beta^2}\right)^{2k}\notag\\
&=\frac{\pi}{2^{n-1}}\sum_{j=0}^n(-1)^ja_{j,n}\beta^{2j}\frac{(z/2)^{-n+j}}{(\sqrt{\alpha^2+\beta^2})^{n+j}}J_{n+j}(z\sqrt{\alpha^2+\beta^2})\notag\\
&=2\pi\sum_{j=0}^n\frac{(-1)^ja_{j,n}\beta^{2j}}{2^{j}z^{n-j}(\sqrt{\alpha^2+\beta^2})^{n+j}}J_{n+j}(z\sqrt{\alpha^2+\beta^2})
\end{align}

Important results are also the following ones
\begin{equation}\label{formula1}
\int_0^ax^\mu(a-x)^\nu J_\mu(x)J_\nu(a-x)dx=\frac{\Gamma(\mu+\frac12)\Gamma(\nu+\frac12)}{\sqrt{2\pi}\Gamma(\mu+\nu+1)}a^{\mu+\nu+\frac12}J_{\mu+\nu+\frac12}(a),
\end{equation}
with $Re\,\mu>-\frac12$ and $Re\,\nu>-\frac12$ (see Gradshteyn-Ryzhik, 1980, formula
6.581(3)),
\begin{equation}\label{formula2}
\int_0^a\frac{J_\mu(x)}{x}\frac{J_\nu(a-x)}{a-x}dx=\left(\frac1\mu+\frac1\nu\right)\frac{J_{\mu+\nu}(a)}{a},
\end{equation}
with $Re(\mu)>0,\,Re(\nu)>0$ (see Gradshteyn and Ryzhik, 1980, formula 6.533(2)), and

 \begin{equation}\label{formula3}
\int_0^\infty  x^{\mu-\nu}J_{\nu+1}(ax)J_\mu(bx)dx=\frac{(a^2-b^2)^{\nu-\mu}b^\mu}{2^{\nu-\mu}a^{\nu+1}\Gamma(\nu-\mu+1)},
\end{equation}
with $a\geq b$, $Re(\nu+1)>Re(\mu)>0$ (see Gradshteyn and Ryzhik, 1980, formula 6.575(1)),

\begin{equation}\label{formula4}
\int_0^{\pi/2}(\sin x)^{\nu+1}\cos(b\cos x)J_\nu(a\sin
x)dx=\sqrt{\frac{\pi}{2}}\frac{a^\nu J_{\nu+\frac{1}{2}}\left(\sqrt{a^2+b^2}\right)}{(a^2+b^2)^{\frac{\nu}{2}+\frac{1}{4}}}
,
\end{equation}
for $Re~\nu>-1$ (see Gradshteyn-Ryzhik, 1980, pag. 743, formula
6.688.(2)).

{\small
 
}

\end{document}